\newtheorem{theorem}{Theorem}[section]
\newtheorem{corollary}[theorem]{Corollary}
\newtheorem{lemma}[theorem]{Lemma}
\newtheorem{proposition}[theorem]{Proposition}
\theoremstyle{remark}
\newtheorem{remark}[theorem]{\sc Remark}
\theoremstyle{remark}
\theoremstyle{definition}
\theoremstyle{remark}
\theoremstyle{remark}
\theoremstyle{remark}
\numberwithin{equation}{section}  % numbers as (1.2) instead of (1) etc
\renewcommand{\Box}{\square}    %\diamond
\newcommand{\cal}{\mathcal}
\newcommand{\corank}{\mathrm{corank\hspace{2pt}}}
\newcommand{\h}{{\mathrm{ht}}}
\renewcommand{\int}{{\mathrm{int}}}
\newcommand{\Sing}{{\mathrm{Sing\hspace{2pt}}}}
\newcommand{\im}{{\mathrm{Im\hspace{1pt}}}}
\renewcommand{\ker}{\mathop{{\mathrm{ker}}}\nolimits}
\newcommand{\coker}{\mathop{{\mathrm{coker}}}\nolimits}
\newcommand{\e}{\varepsilon}
\newcommand{\m}{\setminus}
\newcommand{\fin}{\hspace*{\fill}$\Box$\vspace*{2mm}}
\newcommand{\tF}{F^{\pitchfork}}
\newcommand{\tE}{E^{\pitchfork}}
\newcommand{\tmu}{\mu^{\pitchfork}}
\newcommand{\cA}{{\cal A}}
\newcommand{\cB}{{\cal B}}
\newcommand{\cC}{{\cal C}}
\newcommand{\cU}{{\cal U}}
\newcommand{\cX}{{\cal X}}
\newcommand{\cN}{{\cal N}}
\newcommand{\cY}{{\cal Y}}
\newcommand{\cZ}{{\cal Z}}
\newcommand{\bC}{{\mathbb C}}
\newcommand{\bP}{{\mathbb P}}
\newcommand{\bZ}{{\mathbb Z}}
\newcommand{\bQ}{{\mathbb Q}}
\begin{document}

\title[Milnor Fibre via Deformation]
 {Milnor Fibre Homology via Deformation}

\author{\sc Dirk Siersma}  
%\author{work version}
\address{Institute of Mathematics, Utrecht University, PO
Box 80010, \ 3508 TA Utrecht
 The Netherlands.}

\email{D.Siersma@uu.nl}

\author{\sc Mihai Tib\u ar}

\address{Univ. Lille, CNRS, UMR 8524 - Laboratoire Paul Painlev\'e, F-59000 Lille, France}

\email{tibar@math.univ-lille1.fr}

\thanks{}

\subjclass[2000]{32S30, 58K60, 55R55, 32S25}

\keywords{}

\date{\today}

\dedicatory{Dedicated to Gert-Martin Greuel on the occasion of his 70th birthday}

%\commby{}

%%% ----------------------------------------------------------------------

\begin{abstract}
In case of one-dimensional singular locus, we use deformations in order to get refined information about the Betti numbers of the Milnor fibre.

\end{abstract}
 
%%% ---------------------------------------------------
\maketitle
%%% ---------------------------------------------------

\setcounter{section}{0}

%%%%%%%%%%%%%%%%%%%%%%%%%%%%%%%%%%%%%%%%%%%%%%%%%%%%%%%%%%%%%%%%
\section{Introduction and results}\label{s:intro}
%%%%%%%%%%%%%%%%%%%%%%%%%%%%%%%%%%%%%%%%%%%%%%%%%%%%%%%%%%%%%%%%

We study the topology of Milnor fibres $F$ of function germs on $\bC^{n+1}$ with a 1-dimensional singular set.
Well known is that $F$ is a $(n-2)$ connected $n$-dimensional CW-complex. What can be said about $H_{n-1}(F)$ and $H_{n}(F)$? In this paper we use deformations in order to get  information about these groups. It turns out that  the constraints on $F$ yield only small  numbers $b_{n-1}(F)$, for which we give upper bounds which are in general sharper than the known ones from \cite{Si3}.  
The upper Betti number $b_{n}(F)$ can  be determined from an Euler characteristic formula. We pay special attention to  classes of singularities where  $H_{n-1}(F)=0$, where the homology is concentrated in the middle dimension.

The admissible deformations of the function have a singular locus $\Sigma$ consisting of  a finite set $R$ of isolated points and finitely many curve branches. Each branch $\Sigma_i$ of $\Sigma$ has a generic transversal type (of transversal Milnor fibre $\tF_i$ and Milnor number denoted by $\tmu_i$) and also contains a finite set $Q_i$ of points with non-generic transversal type, which we call \emph{special points}. In the neighbourhood of each such special point $q$ with Milnor fibre denoted by $\cA_q$, there are two monodromies which act on $\tF_i$: the \emph{Milnor monodromy} of the local Milnor fibration of $\tF_i$, and the \emph{vertical monodromy} of the local system defined on the germ  of $\Sigma_{i}\m \{q\}$ at $q$.
 
 In our topological study we work with homology over $\bZ$ (and therefore we systematically omit  $\bZ$ from the notation of the homology groups). 
We provide a detailed expression for $H_{n-1}(F)$ through a topological model of $F$ from which we derive the following results.
\noindent 
\begin{itemize}

\item[a.] If for every component $\Sigma_i$  there exist one vertical monodromy $A_s$, which has no eigenvalues $1$, then $b_{n-1}(F)=0$. More generally: $b_{n-1}(F)$ is bounded by the sum (taken over the components) of the minimum (over that component) of $\dim \ker (A_s-I)$ (Theorem \ref{t:bound}).
\item[b.]
Assume that for each irreducible  component $\Sigma_i$  there is a special singularity at $q$ such that    $H_{n-1}(\cA_q)=0$. Then $H_{n-1}(F) =0$.\\
More generally: 
Let $Q' := \{q_1,\ldots, q_m\}\subset Q$ be a subset of special points such that each branch $\Sigma_{i}$ contains at least one of its points.  Then (Theorem \ref{p:conc}b):
\[ b_{n-1}(F) \le \dim H_{n-1}(\cA_{q_1}) + \cdots + \dim H_{n-1}(\cA_{q_m}).\] 
\end{itemize}
Note that in both cases already some (small) subset of the special points may have a strong effect and that we may choose {\it the best bound.}

\smallskip

In \cite{ST-vh} we have studied the vanishing homology of projective hypersurfaces with a 1-dimensional singular set. The same type of methods work in the local case. We keep the notations close to those in \cite{ST-vh} and refer to it for the proof of certain results.
In the proof of the main theorems we use the Mayer-Vietoris theorem to study local and (semi) global contributions separately. We construct a CW-complex model of two bundles of transversal Milnor fibres (in \S \ref{ss:cw}  and \S \ref{ss:cw2}) and  their inclusion map (\S \ref{ss:proofmain}). Moreover we use the full strength of the results on local 1-dimensional singularities \cite{Si1}, \cite{Si2}, \cite{Si3}, \cite{Si-Cam}, cf also \cite{NS}, \cite{Ra}, \cite{Ti-nonisol}, \cite{Yo}.

We discuss known results such as De Jong's \cite{dJ} and compute several examples in \S \ref{remarks}. 

\medskip
\noindent
{\it Acknowledgment.} Most of the research of this paper took place during a Research in Pairs of the authors at the Mathematisches Forschungsinstitut Oberwolfach in November 2015. The authors thank the institute for the support and excellent atmosphere.

%%%%%%%%%%%%%%%%%%%%%%%%%%%%%%%%%%%%%%%%%%%%%%%%%%%%%%%%%%%%%%%%%%%%%%%%%%%%%%%%%
\section{Local theory of 1-dimensional singular locus} \label{ss:localtheory}
%%%%%%%%%%%%%%%%%%%%%%%%%%%%%%%%%%%%%%%%%%%%%%%%%%%%%%%%%%%%%%%%%%%%%%%%%%%%%%%%%

We work with local data of function germs with 1-dimensional singular locus and we will apply results from the well-known theory which we extract from  \cite{Si3},  \cite{Si-Cam}, and \cite{ST-vh}.

 Let $f : (\bC^{n+1},0) \to (\bC,0)$ be a  holomorphic function germ with singular locus $\Sigma$ of dimension 1 and let  $\Sigma = \Sigma_1 \cup \ldots \cup \Sigma_m $ be its decomposition into irreducible curve components. Let $E := B_{\e}\cap f^{-1}(D_{\delta})$ be the Milnor neighbourhood and  $F$ be the local Milnor fibre of $f$, for small enough $\e$ and $\delta$. 
 The homology $\tilde{H}_*(F)$ is
concentrated in dimensions $n-1$ and $n$. The non-trivial groups are $H_n (F) = \bZ^{\mu_n}$, 
which is free,  and
$H_{n-1} (F)$ which can have torsion. 

There is a well-defined local system on $\Sigma_i \m \{ 0\}$ having as fibre the homology of the transversal Milnor fibre $\tilde{H}_{n-1} (\tF_i)$, where $\tF_i$ is the Milnor fibre of the restriction of $f$ to a transversal hyperplane section at some $x \in \Sigma_i \m \{ 0\}$. This restriction has an isolated singularity
whose equisingularity class is independent of the point $x$ and of the transversal section, in particular  $\tilde{H}_{*} (\tF_i)$
is concentrated in dimension $n-1$. It is on this group that acts the \emph{local system monodromy} (also called \emph{vertical monodromy}):
\[ A_i: \tilde{H}_{n-1} (\tF_i) \rightarrow \tilde{H}_{n-1} (\tF_i). \]

After \cite{Si3}, one considers a tubular neighbourhood $\cN := \sqcup_{i=}^m \cN_i$ of the link of $\Sigma$ and decomposes the boundary $\partial F :=  F \cap \partial B_{\e}$ of the Milnor fibre as
$\partial F = \partial_1 F\cup \partial_2 F$,
where $\partial_2 F := \partial F \cap \cN$. Then $\partial_2 F = \displaystyle{\mathop{\sqcup}^{m}_{i=1}} \partial_2 F_i$, where $\partial_2 F_i := \partial_2 F \cap \cN_i$.

%The homology groups of $\partial_2 F$  are related to the local system monodromies $A_i$ in the following way. 
  Each boundary component $\partial_2 F_i$ is fibred over the link of $\Sigma_i$ with fibre $\tF_i$.
Let then $\tE_i$ denote the transversal Milnor neighbourhood containing the transversal fibre $\tF_i$ and let $\partial_2 E_i$  denote the total space of its fibration above the link of $\Sigma_i$. Therefore $\tE_i$ is contractible and $\partial_2 E_i$ retracts to the link of $\Sigma_i$. The pair $(\partial_2 E_i,\partial_2 F_i)$  is related to $A_i - I$ via  the following exact relative Wang sequence \cite{ST-vh} ( $n \geq 2$):
%%%%%%%%%%%%%%%%%%%%%%%%%%
\begin{equation} \label{l:localrelative}  % \cite[Lemma 3.1]{ST-vh}  
  0 \rightarrow H_{n+1} (\partial_2 E_i,\partial_2 F_i) \rightarrow H_{n} (\tE_i,\tF_i)
\stackrel{A_i -I}{\rightarrow} H_{n} (\tE_i,\tF_i) \rightarrow H_{n-1}
(\partial_2 E_i, \partial_2 F_i) \rightarrow 0 .
\end{equation}

%%%%%%%%%%%%%%%%%%%%%%%%%%%%%%%%%%%%%%%%%%%%%%%%%%%%%%%%%%%%
\section{Deformation and vanishing homology}\label{s:defvh} 
%%%%%%%%%%%%%%%%%%%%%%%%%%%%%%%%%%%%%%%%%%%%%%%%%%%%%%%%%%%%

Consider now a 1-parameter family $f_s:  (\bC^{n+1},0) \to (\bC,0)$ where  $f_0 = \hat{f} : (\bC^{n+1},0) \to (\bC,0)$ is a given germ with singular locus $\hat{\Sigma}$ of dimension 1,  with  Milnor data $(\hat{E},\hat{F})$ and $\hat{\Sigma }= \hat{\Sigma_1} \cup \ldots \cup \hat{\Sigma_m }$ and all the other objects defined like in \S \ref{ss:localtheory}.  
We use the notation with ``hat'' since we reserve the notation without ``hat'' for the deformation $f_{s}$.   

We fix a ball $B:= B_{\e}\subset \bC^{n+1}$ centered at $0$ and a disk $\Delta :=\Delta_{\delta}\subset \bC$  at $0$ such that, for small enough radii $\e$ and $\delta$ the restriction to the punctured disc $\hat{f}_{|} : B \cap f^{-1}(\Delta^{*}) \to \Delta^{*}$ is the Milnor fibration of $\hat{f}$.

We say that the deformation $f_{s}$ is {\it admissible} if it has good behavior at the boundary, i.e.,  if for small enough $s$ the family ${f_{s}}_{|} : \partial B \cap f^{-1}(\Delta) \to \Delta$ is stratified topologically trivial.\footnote{
Such a situation occurs e.g.in the case of an ``equi-transversal deformation'' considered in \cite{MaSi}.}

We choose a value of $s$ which satisfies the above conditions and write from now on
 $f :=f_s$.
It then follows that the pair $(E,F) := (B\cap f^{-1}(\Delta), f^{-1}(b))$, where $b\in \partial \Delta$, is topologically equivalent to the  Milnor data $(\hat{E},\hat{F})$ of $\hat{f}$.  Note that for $f$ we consider the semi-local singular fibration inside $B$ and not just its Milnor fibration at the origin.

\medskip

Let $\Sigma \subset B$ be the 1-dimensional singular part of the singular set $\Sing(f)\subset B$.
Note that $\hat{\Sigma} = \bigcup_{i \in \hat{I}} \hat{\Sigma}$ and  $\Sigma = \bigcup_{i \in I} \Sigma_i$ can have a different number of irreducible components.  It follows that the circle boundaries $\partial B \cap \hat \Sigma$  of $ \hat\Sigma$ identify to the circle boundaries $\partial B \cap \Sigma$  of $\Sigma$ and that the corresponding  vertical monodromies  are the same.

\subsection{Notations}\label{ss:notations} We use notations similar to \cite{ST-vh} (cf also figure \ref{fig:defo}).\\
A point $q$ on $\Sigma$ is called {\it special} if the transversal Milnor fibration is not a local product in a neighbourhood of that point. 
 
 \noindent
  $Q_i:=$  the set of special points on $\Sigma_i$; $Q:= \cup_{i\in I} Q_i$, \\
 $R:=$  the set of isolated singular points; $R = R_0 \cup R_1$, where $R_0$ are the critical points on $f^{-1}(0)$ and  $R_1$ the critical points outside $f^{-1}(0)$,

 \noindent
$ B_q, B_r$ = small enough disjoint Milnor balls within $E$  at the points $q \in Q, r \in R$ resp. \\
 $B_Q := \sqcup_q B_q$ and $B_R := \sqcup_r B_r$, and similar notation for $B_{R_0}$ and $B_{R_1}$,

 \noindent
$\Sigma^*_i := \Sigma_i \m B_Q$; $\Sigma^* = \cup_{i\in I} \Sigma_i^*$, \\
 $\cU_i :=$ small enough  tubular neighbourhood of $\Sigma^*_i$; $\cU = \cup_{i} \cU_i$,\\
  $\pi_\Sigma : \cU \to \Sigma^*$ is the projection of the tubular neighbourhood.\\
$T= \{f(r) | r \in R \} \cup \{f(\Sigma)\}$ is the set of critical values of $f$ and we assume without loss of generality that $f(\Sigma)= 0$.

\medskip
Let$\{\Delta_{t}\}_{t\in T}$ be a system of non-intersecting small discs $\Delta_t$ around each $ t \in T$.
For any $t \in T $, choose $t'\in \partial\Delta_t$. If $t = f(r)$ then we denote by  $t'(r)$  the point $t' \in \Delta_{f(r)}$. For $t=0$ we use the notations $t_{0}$ and $t'_{0}$ respectively.

Let $E_r = B_r \cap f^{-1}(\Delta_{ f(r)})$ and  $F_r = B_r \cap f^{-1}(t'(r))$ be the Milnor data of the isolated singularity of $f$ at $r\in R$.
We  use next the additivity of vanishing homology with respect to the different critical values  and the connected
 components of $\Sing f$.
\begin{figure}[htbp]
    \centering
        \includegraphics[width=11 cm]{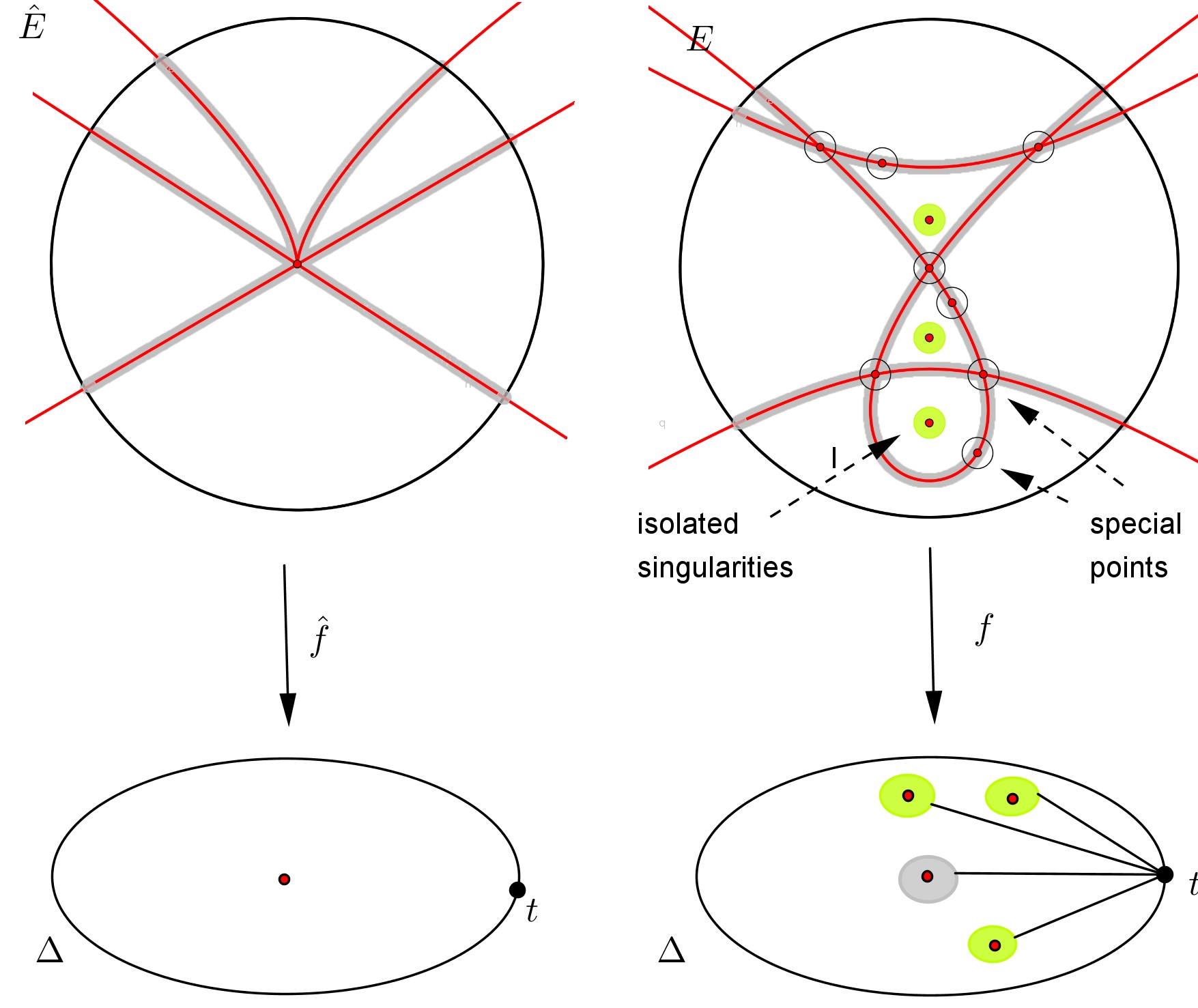}
    \caption{Admissible deformation}
    \label{fig:defo}
\end{figure}
By homotopy retraction and by excision we have:
\begin{equation}\label{eq:firstexision}
  H_*(E, F) \simeq \oplus_{t \in T}H_*((f^{-1}(\Delta_t), f^{-1}(t') ) =
\end{equation}	
\begin{equation} \label{eq:directsumdecomp}
	= \oplus_{r\in R_0} H_*(E_r,F_r) \oplus H_*(E_0,F_0) \oplus \oplus_{r\in R_1} H_*(E_r,F_r),
\end{equation} 
where $(E_0,F_0) = (f^{-1}(\Delta_{0}) \cap (\cU  \cup B_Q), f^{-1}(t'_{0})  \cap (\cU \cup  B_Q)$
\noindent
We introduce the following shorter notations:
 \[ 
(\cX_q,\cA_q) := (f^{-1}(\Delta_{0}) \cap B_q,f^{-1}({t'_{0}})  \cap B_q) \;  \]
\[ \; \cX = \sqcup_Q \cX_q  \;  \; , \; \;  \cA= \sqcup_Q \cA_q   \]
\[ \cY = \cU \cap f^{-1}(\Delta_0 ) \; , \;  
 \cB :=  f^{-1}({t'_{0}}) \cap \cY \;  \;
  \]
\[   \cZ := \cX\cap \cY   \; \; , \; \cC := \cA\cap \cB  \]
\smallskip
 In these new notations we have: 
\begin{equation}\label{eq:directsumdecomp2}
  H_*(E, F) \simeq H_*(\cX\cup \cY, \cA\cup \cB) \oplus \oplus_{r\in R} H_*(E_r, F_r).
\end{equation}
Note that each direct summand $H_*(E_r,F_r)$ is concentrated in dimension $n+1$ since it identifies to the Milnor lattice $\bZ^{\mu_{r}}$ of the isolated singularities germs of $f-f(r)$ at $r$, where $\mu_r$ denotes its Milnor number.
\smallskip
 We deal from now on with the  term $H_*(\cX\cup \cY, \cA\cup \cB)$ in the direct sum of \eqref{eq:directsumdecomp2}. \\
We consider the relative Mayer-Vietoris long exact sequence:
\begin{equation}\label{eq:mv}
 \cdots \to H_{*}(\cZ,\cC) \to H_{*}(\cX,\cA) \oplus H_{*}(\cY,\cB) \to  H_{*}(\cX\cup \cY, \cA\cup \cB) \stackrel{\partial_s}{\to} \cdots
\end{equation}
 of the pair $(\cX\cup \cY, \cA\cup \cB)$ and we compute each term of it in the following.  The description follows closely \cite{ST-vh} where we have treated deformations of projective hypersurfaces.

%%%%%%%%%%%%%%%%%%%%%%%%%%%%%%%%%%%%%%%%%%%%
\subsection{The homology of $(\cX,\cA)$}\label{ss:term}
One has the direct sum decomposition $H_{*}(\cX,\cA) \simeq \oplus_{q\in Q} H_*(\cX_q,\cA_q)$  %%
since $\cX$ is a disjoint union. 
The pairs $(\cX_q,\cA_q)$ are local Milnor data of the hypersurface germs $(f^{-1}(t_0), q)$ with 1-dimensional singular locus and therefore the relative homology $H_*(\cX_q,\cA_q)$ is concentrated in dimensions $n$ and $n+1$.
 
%%%%%%%%%%%%%%%%%%%%%%%%%%%%%%%%%%%%%%%%%%%%%
\subsection{The homology of $(\cZ,\cC)$}\label{ss:term2}
The pair $(\cZ,\cC)$ is a disjoint union of pairs localized at points  $q \in Q$. %%
 For such points we have one contribution for each {\it locally irreducible branch of the germ $(\Sigma,q)$}. Let  $S_q$ be the index set of all these branches at $q\in Q$. %%%
By abuse of notation we will also write $s \in S_q$ for the corresponding small loops around $q$ in $\Sigma_i$. 
 For some $q\in \Sigma_{i_1} \cap \Sigma_{i_2}$, the set of indices $S_q$ runs over all the local irreducible components of the curve germ $(\Sigma, q)$. %%
Nevertheless, when we are counting the local irreducible branches at some point $q\in Q_i$ on a specified component $\Sigma_i$ then the set $S_q$ will tacitly mean only those local branches of $\Sigma_i$ at $q$. %%
We get the following decomposition:
\begin{equation}\label{eq:sumdecomp}
  H_{*}(\cZ,\cC) \simeq  \oplus_{q\in Q} 
\oplus_{s\in S_q} H_{*}(\cZ_s, \cC_s).
\end{equation}

More precisely, one such local pair $(\cZ_s, \cC_s)$ is the bundle over the corresponding component of the link of the curve germ $\Sigma$ at $q$ having as fibre the local transversal Milnor data $(\tE_s, \tF_s)$, with transversal Milnor numbers denoted by $\tmu_s$. These data depend only on the branch $\Sigma_i$ containing $s$,  and therefore if $s \subset \Sigma_i$ we sometimes write $(\tE_i, \tF_i)$ and $\tmu_i$ .
In the notations of \S \ref{ss:localtheory}, we have:
 $\partial_2 \cA_q =\mathop{\sqcup}_{s \in S_q} \cC_s $.

The relative homology groups in the above direct sum decomposition \eqref{eq:sumdecomp} depend on the \emph{local system  monodromy} $A_s$ via the following Wang sequence which is a relative version of \eqref{l:localrelative} and has been proved in \cite[Lemma 3.1]{ST-vh}:
\begin{equation}\label{eq:wang2}
 0 \rightarrow H_{n+1} (\cZ_s,\cC_s)) \rightarrow H_{n} (\tE_s,\tF_s)
\stackrel{A_s -I}{\rightarrow} H_{n} (\tE_s,\tF_s) \rightarrow H_{n}
(\cZ_s,\cC_s) \rightarrow 0.
\end{equation}
 
From this we get: 

\begin{lemma}\label{l:concentration}
At $q \in Q$,  for each $s\in S_q$ one has:
\begin{eqnarray*}
  H_{k}(\cZ_s,\cC_s ) = 0 &  k\not= n, n+1,\\ %%
H_{n+1}(\cZ_s,\cC_s) \cong {\ker} \; (A_s - I), &   H_{n}(\cZ_s,\cC_s ) \cong {\coker} \; (A_s - I).
\end{eqnarray*}
\fin
\end{lemma}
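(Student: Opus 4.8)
The plan is to deduce the whole statement from the Wang sequence \eqref{eq:wang2} together with the fact, recalled in \S\ref{ss:localtheory}, that the reduced homology of the transversal Milnor fibre is concentrated in a single degree. Once this concentration is in place, the lemma is a purely homological extraction of a kernel and a cokernel from an exact sequence, plus a vanishing argument in the remaining degrees.

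First I would record the concentration of the relative homology of the transversal pair. Since $\tE_s$ is contractible and the transversal Milnor fibre $\tF_s$ is that of an isolated hypersurface singularity, its reduced homology $\tilde H_*(\tF_s)$ is concentrated in degree $n-1$. The long exact sequence of the pair $(\tE_s,\tF_s)$ then yields $H_k(\tE_s,\tF_s)\cong \tilde H_{k-1}(\tF_s)$, so that $H_k(\tE_s,\tF_s)=0$ for $k\neq n$, while $H_n(\tE_s,\tF_s)\cong \tilde H_{n-1}(\tF_s)\cong\bZ^{\tmu_s}$.

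Next I would feed this into the full relative Wang sequence of the fibration of $(\cZ_s,\cC_s)$ over the corresponding circle component of the link of $\Sigma$ at $q$, whose fibre pair is $(\tE_s,\tF_s)$ and whose monodromy on relative homology is $A_s$. In degree $k$ this sequence reads
\[ \cdots \rightarrow H_k(\tE_s,\tF_s)\stackrel{A_s-I}{\rightarrow} H_k(\tE_s,\tF_s)\rightarrow H_k(\cZ_s,\cC_s)\rightarrow H_{k-1}(\tE_s,\tF_s)\stackrel{A_s-I}{\rightarrow}\cdots \]
For $k\neq n,n+1$ both bordering fibre terms $H_k(\tE_s,\tF_s)$ and $H_{k-1}(\tE_s,\tF_s)$ vanish by the previous step, so exactness forces $H_k(\cZ_s,\cC_s)=0$; this gives the first assertion. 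The only surviving portion is precisely the four-term sequence \eqref{eq:wang2}, in which the outer fibre terms in degrees $n+2$ and $n-1$ are zero. Exactness at its left-hand end identifies $H_{n+1}(\cZ_s,\cC_s)$ with $\ker(A_s-I)\subset H_n(\tE_s,\tF_s)$, and exactness at its right-hand end identifies $H_n(\cZ_s,\cC_s)$ with $\coker(A_s-I)$, as claimed.

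I do not expect a genuine obstacle: the lemma is a formal consequence of \eqref{eq:wang2}. The only point requiring care is the concentration of $H_*(\tE_s,\tF_s)$ in the single degree $n$, which is exactly what allows the full Wang sequence to collapse to the four terms of \eqref{eq:wang2} and, simultaneously, what forces the vanishing of $H_k(\cZ_s,\cC_s)$ in all the remaining degrees.
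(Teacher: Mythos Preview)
Your proof is correct and is precisely the argument the paper has in mind: the lemma is stated immediately after the relative Wang sequence \eqref{eq:wang2} with the phrase ``From this we get'' and no further proof, so your careful unpacking of the concentration of $H_*(\tE_s,\tF_s)$ in degree $n$ and the resulting collapse of the Wang sequence is exactly the intended (but omitted) reasoning.
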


We conclude that $H_*(\cZ,\cC)$ is concentrated in dimensions $n$ and $n+1$ only.
\smallskip

\begin{figure}[htbp]
    \centering
        \includegraphics[width=13 cm]{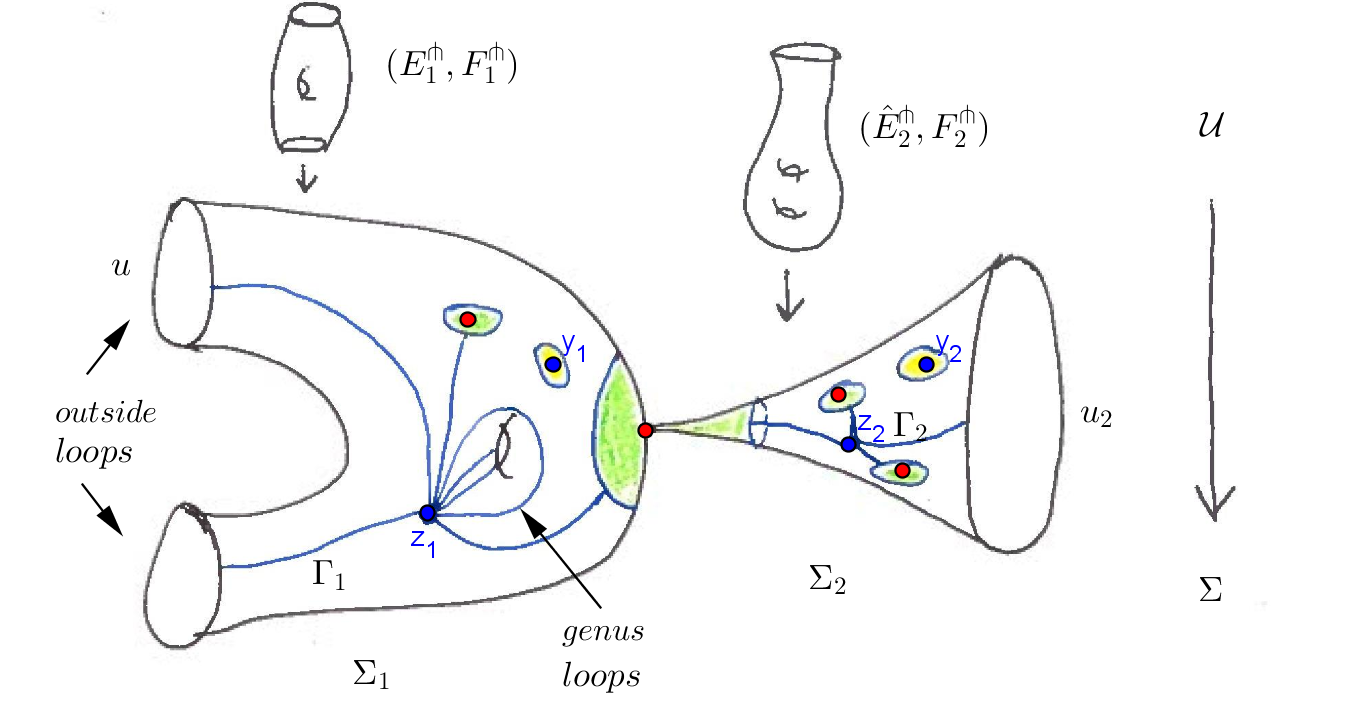}
    \caption{Critical set and the cell models for  $(\cZ,\cC)$ and $(\cY,\cB)$.}
    \label{fig:critset}
\end{figure}

%%%%%%%%%%%%%%%%%%%%%%%%%%%%%%%%%%%%%%%%%%%%%%%%%%%%
\subsection{The CW-complex structure of $(\cZ,\cC)$} \label{ss:cw}%%
The pair $(\cZ_s,\cC_s)$ has the following structure of a relative CW-complex, up to homotopy.
Each bundle over some circle link can be obtained from a trivial bundle over an interval by identifying %%
the fibres above the end points via the geometric  monodromy $A_s$.
In order to obtain $\cZ_s$ from $\cC_s$ one can start by first attaching $n$-cells  $c_{1}, \ldots, c_{\tmu_s}$
 to the fibre $\tF_s$ in order to kill the $\tmu_s$ generators of $H_{n-1}(\tF_s)$ at the identified ends, %%
  and next by attaching $(n+1)$-cells $e_{1}, \ldots, e_{\tmu_s}$ to the preceding  $n$-skeleton. The attaching of some $(n+1)$-cell goes as follows: consider some $n$-cell $a$ of the $n$-skeleton and take the cylinder $I\times a$ as an $(n+1)$-cell. Fix an orientation of the    circle link, attach the base $\{ 0\}\times a$ over $a$, then follow the circle bundle in the fixed orientation by the monodromy $A_s$ and attach the end $\{ 1\}\times a$ over $A_s(a)$.
At the level of the cell complex, the boundary map of this attaching identifies %%
to $A_s -I : \bZ^{\tmu_s} \to \bZ^{ \tmu_s}$.

%%%%%%%%%%%%%%%%%%%%%%%%%%%%%%%%%%%%%%%%%%%%%%%%%%%%%
\subsection{The CW-complex structure of $(\cY,\cB)$} \label{ss:cw2} 

The curve $\Sigma$ has as boundary components the intersection $\partial B \cap \Sigma$ with the Milnor ball.
They are all topological circles. We denote them with $u \in U_i$, $U := \cup_{i} U_i$ and call them {\it outside} loops. Note that over any such loop $u \in U_i$ we have a local system monodromy 
 $A_u:  \bZ^{\tmu_i} \rightarrow \bZ^{\tmu_i}$. In fact this monodromy did not change in the admissible deformation from $\hat{f}$ to $f$.

 For technical reasons we introduce one more puncture $y_i$  on $\Sigma_i$ and next redefine $\Sigma^*_i := \Sigma\m (Q \cup \{y_i\})$ 
Moreover we use notations $(\cX_y,\cA_y)$ and $(\cZ_y,\cC_y)$.
We choose the following sets of loops\footnote{We identify the loops with their index sets.} in $\Sigma_i$:
\begin{itemize}
\item[ $G_i$] the $2g_i$ loops (called \emph{genus loops} in the following) which are
 generators of  $\pi_1$ of the normalization $\tilde\Sigma_i$ of $\Sigma_i$,   where $g_i$ denotes the genus of this normalization (which is a Riemann surface with boundary),
\item[ $S_i$] the loops $s$ around the special points $q \in Q_i$,
\item[ $U_i$] the outside loops,
\end{itemize}
and define $W_i = G_i \sqcup S_i \sqcup U_i$ and  $W= \sqcup W_i$.
By enlarging ``the hole'' defined by the  puncture $y_i$, we retract $\Sigma^*_i$ to some configuration of  loops connected by non-intersecting paths to some point $z_{i}$, denoted by $\Gamma_i$ (see Figure \ref{fig:critset}). The number of loops is  $\# W_i = 2g_{i} + \tau_i +\gamma_i$, where $\tau_i := \# U_i$ and $\gamma_i := \sum_{q\in Q_i}\# S_q$. 
Note that $\tau_i >0$ since there must be at least one outside loop.

 Each  pair $(\cY_i,\cB_i)$ is then homotopy equivalent (by retraction) 
 to the pair  $(\pi_\Sigma^{-1}(\Gamma_i),\cB\cap \pi_\Sigma^{-1}(\Gamma_i))$.  We endow the latter with the structure of a relative CW-complex as we did with $(\cZ,\cC)$  at \S \ref{ss:cw}, namely for each  %%
 loop the similar CW-complex structure as we have defined above for some pair $(\cZ_s,\cC_s)$.
The difference is that the pairs $(\cZ_s,\cC_s)$ are disjoint whereas in $\Sigma^*_i$ 
the loops meet at a single point $z_i$. We thus  take as reference the transversal
 fibre $\tF_i  = \cB \cap\pi_\Sigma^{-1}(z_i)$ above this point, namely we attach the $n$-cells (thimbles) only once to this single fibre in order to kill the $\tmu_i$ generators of $H_{n-1}(\tF_i)$. %%
The $(n+1)$-cells of $(\cY_i,\cB_i)$ correspond to the fibre bundles over the loops in the bouquet model of $\Sigma^*_i$. Over each loop,  one attaches a number of $\tmu_i$ $(n+1)$-cells to the fixed $n$-skeleton %%
described before, more precisely one $(n+1)$-cell over one $n$-cell generator of the $n$-skeleton.
We extend for $w \in W$ the notation $(\cZ_g,\cC_g)$ to genus loops and $(\cZ_u,\cC_u)$ to outside loops, although they are not contained in $(\cZ,\cC)$ but in $(\cY, \cB)$.

Here the attaching map of the $(n+1)$-cells corresponding to the bundle over a genus loop, or over an outer loop,
  can be identified with $A_g -I : \bZ^{\tmu_i} \to \bZ^{\tmu_i}$, or with $A_u -I : \bZ^{\tmu_i} \to \bZ^{\tmu_i}$, respectively. We have seen that the monodromy $A_{u}$ over some outer loop indexed by $u\in U_{i}$ is necessarily one of the vertical monodromies of the original function $\hat f$.

From this CW-complex structure we get the following precise description in terms of the  monodromies of the transversal local system, the proof of which is similar to that of \cite[Lemma 4.4]{ST-vh}:
\begin{lemma}\label{p:concentr}\ 
\begin{enumerate}
\item
 $H_{k}(\cY,\cB) = \oplus_{i\in I}H_{k}(\cY_i,\cB_i)$ and this is $=0$   for $k \ne n, n+1.$  
\item 
$H_{n}(\cY_i,\cB_i) \simeq  \bZ^{\tmu_i} /  \langle \im (A_w-I) \mid  w\in W_i \rangle$,
\item
$\chi(\cY_i,\cB_i) = (-1)^{n-1}(2g_i +\tau_i + \gamma_i -1) \tmu_i$.
\end{enumerate}
\fin
\end{lemma}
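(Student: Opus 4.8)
The plan is to read off all three assertions directly from the relative CW-complex structure of $(\cY_i,\cB_i)$ constructed in \S\ref{ss:cw2}, which already encodes the essential geometry; the only work left is bookkeeping of the cellular chain complex. First I would settle the direct sum in (1): since $\cY=\sqcup_{i\in I}\cY_i$ is a disjoint union (the punctured curves $\Sigma^*_i$ are pairwise disjoint once the extra punctures $y_i$ are introduced), relative homology splits over $i$, and it suffices to treat a single pair $(\cY_i,\cB_i)$.

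Fix $i$ and replace $(\cY_i,\cB_i)$ by the homotopy equivalent pair $(\pi_\Sigma^{-1}(\Gamma_i),\cB\cap\pi_\Sigma^{-1}(\Gamma_i))$. By construction this relative CW-complex has cells only in dimensions $n$ and $n+1$: the $\tmu_i$ thimbles attached once to the reference fibre $\tF_i$ over $z_i$, and, for each loop $w\in W_i$, a block of $\tmu_i$ cells in dimension $n+1$. Consequently the relative cellular chain complex is concentrated in degrees $n$ and $n+1$, giving at once $H_k(\cY_i,\cB_i)=0$ for $k\ne n,n+1$, which finishes (1). The complex itself is
\[
0 \to \bigoplus_{w\in W_i}\bZ^{\tmu_i} \stackrel{\partial}{\longrightarrow} \bZ^{\tmu_i} \to 0,
\]
where, by the attaching description of \S\ref{ss:cw2}, the restriction of $\partial$ to the summand indexed by $w$ is $A_w-I$, so that $\partial\bigl((x_w)_w\bigr)=\sum_{w\in W_i}(A_w-I)x_w$.

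The remaining assertions are then immediate. The cokernel of $\partial$ is $\bZ^{\tmu_i}$ divided by the subgroup generated by the images $\im(A_w-I)$, $w\in W_i$, which is exactly the description of $H_n(\cY_i,\cB_i)$ in (2). For (3) I would just count relative cells, $\tmu_i$ in degree $n$ and $\#W_i\,\tmu_i=(2g_i+\tau_i+\gamma_i)\tmu_i$ in degree $n+1$, whence
\[
\chi(\cY_i,\cB_i)=(-1)^n\bigl(1-(2g_i+\tau_i+\gamma_i)\bigr)\tmu_i=(-1)^{n-1}(2g_i+\tau_i+\gamma_i-1)\tmu_i .
\]

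The one point needing care — and where I would lean on the analogous \cite[Lemma 4.4]{ST-vh} — is the verification that the shared $n$-skeleton together with the loop-indexed $(n+1)$-cells really assembles into this single block-structured boundary map $\partial$. The subtlety is that all loops of the bouquet model of $\Sigma^*_i$ meet at the single point $z_i$, so the $\tmu_i$ thimbles are glued only once, whereas each loop $w$ (be it a genus loop, a loop $s$ around a special point, or an outside loop $u$, with $A_u$ a vertical monodromy of $\hat f$) contributes its own block of $(n+1)$-cells whose boundary lands via $A_w-I$ in that common copy of $\bZ^{\tmu_i}$. One must also check that no relative cell occurs below dimension $n$: this is precisely the statement that the thimbles kill the $\tmu_i$ generators of $H_{n-1}(\tF_i)$, so that the pair has no relative homology in degrees $<n$, and that the attaching recipe of \S\ref{ss:cw} transports verbatim from the disjoint pairs $(\cZ_s,\cC_s)$ to the loops glued at $z_i$. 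Granting this geometric input, the algebra above is routine.
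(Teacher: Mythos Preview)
Your proposal is correct and follows exactly the approach the paper intends: the paper does not give a proof but simply points to the CW-complex structure of \S\ref{ss:cw2} and says the argument is similar to \cite[Lemma 4.4]{ST-vh}, and what you have written is precisely that argument spelled out---the two-term cellular chain complex $\bigoplus_{w\in W_i}\bZ^{\tmu_i}\stackrel{\partial}{\to}\bZ^{\tmu_i}$ with $\partial$ blockwise equal to $A_w-I$, from which (1)--(3) are read off. One small quibble: the disjointness of the $\cY_i$ comes from removing the special points $Q$ (where branches may meet), not from introducing the extra punctures $y_i$; the latter serve only to make each $\Sigma^*_i$ retract onto the bouquet $\Gamma_i$.
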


If we apply $\chi$ to \eqref{eq:directsumdecomp2} and \eqref{eq:mv} and  take into account that $\chi(\cZ, \cC)=0$, we get: \\ 
 $\chi(E,F) = \chi(\cX,\cA) + \chi(\cY,\cB) + \sum_r \chi(E_r,F_r)$. From this we derive 
the Euler characteristic\footnote{already computed  in \cite{MaSi}} of the Milnor fibre  $F$:
\begin{proposition}\label{p:euler}
\[ \chi(F) = 1+ \sum_{q\in Q} (\chi(\cA_q)-1) + (-1)^{n}\sum_{i\in I} (2g_i +\tau_i + \gamma_i -2) \tmu_i + (-1)^{n}\sum_{r\in R} \mu_{r}.\]
\fin
\end{proposition}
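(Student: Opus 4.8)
The plan is to compute the relative Euler characteristic $\chi(E,F)$ from the decompositions already assembled, and then to convert it into $\chi(F)$ using that the Milnor neighbourhood $E$ is contractible. Since $E$ retracts to the origin, the long exact sequence of the pair $(E,F)$ gives $\chi(E,F)=\chi(E)-\chi(F)=1-\chi(F)$, so it will suffice to evaluate each summand contributing to $\chi(E,F)$ and to solve for $\chi(F)$.

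First I would record that $\chi(\cZ,\cC)=0$. By Lemma \ref{l:concentration} the only nonzero groups $H_{n+1}(\cZ_s,\cC_s)\cong\ker(A_s-I)$ and $H_n(\cZ_s,\cC_s)\cong\coker(A_s-I)$ come from an endomorphism $A_s-I$ of the free module $\bZ^{\tmu_s}$, whose kernel and cokernel have equal rank; hence each $\chi(\cZ_s,\cC_s)=0$ and, summing over \eqref{eq:sumdecomp}, $\chi(\cZ,\cC)=0$. Applying $\chi$ to \eqref{eq:directsumdecomp2} and to the Mayer--Vietoris sequence \eqref{eq:mv} then yields the additivity relation $\chi(E,F)=\chi(\cX,\cA)+\chi(\cY,\cB)+\sum_{r\in R}\chi(E_r,F_r)$. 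The last term is immediate, since each $H_*(E_r,F_r)$ is concentrated in degree $n+1$ with rank $\mu_r$, so $\sum_r\chi(E_r,F_r)=(-1)^{n+1}\sum_{r\in R}\mu_r$. For the middle term I would invoke Lemma \ref{p:concentr}(c), which gives $\chi(\cY,\cB)=\sum_{i\in I}\chi(\cY_i,\cB_i)=(-1)^{n-1}\sum_{i\in I}(2g_i+\tau_i+\gamma_i-1)\tmu_i$.

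It then remains to compute $\chi(\cX,\cA)$, and this is where the one subtle bookkeeping point enters. With the auxiliary punctures $y_i$ introduced in \S\ref{ss:cw2}, the tubular piece $\cY$ is taken over $\Sigma^*=\Sigma\setminus(Q\cup\{y_i\})$, so the neighbourhoods of the generic points $y_i$ are no longer part of $\cY$ but contribute extra local pairs $(\cX_{y_i},\cA_{y_i})$ to $\cX$. Over a genuine special point $q\in Q$ the space $\cX_q$ retracts to the singular fibre and is contractible, whence $\chi(\cX_q,\cA_q)=1-\chi(\cA_q)$; over a generic puncture $y_i$ the local structure is a product, so $\cA_{y_i}\simeq\tF_i$ with $\chi(\tF_i)=1+(-1)^{n-1}\tmu_i$ while $\cX_{y_i}$ is contractible, giving $\chi(\cX_{y_i},\cA_{y_i})=(-1)^{n}\tmu_i$. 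Hence $\chi(\cX,\cA)=-\sum_{q\in Q}(\chi(\cA_q)-1)+(-1)^{n}\sum_{i\in I}\tmu_i$.

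Substituting the three contributions into $\chi(F)=1-\chi(E,F)$ and collecting terms will finish the argument: the $(-1)^n\sum_i\tmu_i$ coming from the auxiliary punctures combines with the factor $(2g_i+\tau_i+\gamma_i-1)$ from $\chi(\cY,\cB)$ to produce $(2g_i+\tau_i+\gamma_i-2)$, while the remaining signs rearrange to the stated expression. I expect the only genuine obstacle to be keeping this auxiliary-puncture accounting consistent: one must remember that redefining $\Sigma^*$ in \S\ref{ss:cw2} transfers the $y_i$-neighbourhoods from $\cY$ into $\cX$, and it is exactly their contribution $(-1)^n\sum_i\tmu_i$ that shifts the coefficient from the $-1$ of Lemma \ref{p:concentr}(c) to the $-2$ appearing in the Proposition.
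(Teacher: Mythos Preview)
Your proof is correct and follows exactly the route sketched in the paper just before the Proposition: apply $\chi$ to the direct-sum decomposition \eqref{eq:directsumdecomp2} and to the Mayer--Vietoris sequence \eqref{eq:mv}, use $\chi(\cZ,\cC)=0$, and unpack each term. You have simply written out in full the bookkeeping that the paper leaves implicit, in particular the contribution of the auxiliary punctures $y_i$ to $\chi(\cX,\cA)$, which is precisely what shifts the coefficient from the $-1$ of Lemma~\ref{p:concentr}(c) to the $-2$ in the stated formula.
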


\begin{proposition}\label{p:7-term}
The relative Mayer-Vietoris sequence \eqref{eq:mv} is trivial except of the following  6-terms sequence:
\begin{equation}\label{eq:MV7}\begin{array}{l}
 \ \ \ \ 0 \to H_{n+1}(\cZ,\cC) \to H_{n+1}(\cX,\cA) 
\oplus H_{n+1}(\cY,\cB) \to  H_{n+1}(\cX \cup \cY , \cA \cup \cB) \to \\
 \ \ \ \ \ \to  H_{n}(\cZ,\cC)\stackrel{j}{\to} H_{n}(\cX,\cA) \oplus H_{n}(\cY,\cB)
 \to H_{n}(\cX \cup \cY , \cA \cup \cB) \to  0.
 \end{array}
\end{equation}\fin
\end{proposition}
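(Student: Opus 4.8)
The plan is to show that all four relative homology groups occurring in the Mayer--Vietoris sequence \eqref{eq:mv} are concentrated in the two adjacent degrees $n$ and $n+1$, and then to splice the long exact sequence accordingly. Once the concentration is in place, the statement is pure bookkeeping: every entry of \eqref{eq:mv} in a degree outside $\{n,n+1\}$ vanishes, so the only surviving stretch of the sequence is exactly the six-term piece \eqref{eq:MV7}.

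First I would collect the concentration facts already established. The pair $(\cZ,\cC)$ has $H_*(\cZ,\cC)$ concentrated in degrees $n$ and $n+1$ by Lemma \ref{l:concentration}; the pair $(\cX,\cA)$ has $H_*(\cX,\cA)$ concentrated in degrees $n$ and $n+1$ by the discussion in \S \ref{ss:term}; and $H_*(\cY,\cB)$ is concentrated in degrees $n$ and $n+1$ by Lemma \ref{p:concentr}(1). The one term not yet controlled is $H_*(\cX\cup\cY,\cA\cup\cB)$. For this I would invoke the direct sum decomposition \eqref{eq:directsumdecomp2}, which exhibits $H_*(\cX\cup\cY,\cA\cup\cB)$ as a direct summand of $H_*(E,F)$. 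Since $E$ is contractible, the long exact sequence of the pair gives $H_j(E,F)\cong\tilde H_{j-1}(F)$, and $\tilde H_*(F)$ is concentrated in degrees $n-1$ and $n$ because $F$ is $(n-2)$-connected and has the homotopy type of an $n$-dimensional CW-complex. Hence $H_j(E,F)=0$ for $j\ne n,n+1$, and therefore its direct summand $H_j(\cX\cup\cY,\cA\cup\cB)$ vanishes for $j\ne n,n+1$ as well.

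With all four pairs concentrated in degrees $n$ and $n+1$, I would read off \eqref{eq:mv}: in every degree $k\notin\{n,n+1\}$ each of the three groups of $\cZ$, $\cX\oplus\cY$, and $\cX\cup\cY$ is zero, so the sequence is trivial there. The remaining non-trivial portion runs from $H_{n+1}(\cZ,\cC)$ down to $H_n(\cX\cup\cY,\cA\cup\cB)$. The leading zero of \eqref{eq:MV7} is produced by the vanishing of the preceding term $H_{n+2}(\cX\cup\cY,\cA\cup\cB)$: the connecting map into $H_{n+1}(\cZ,\cC)$ is then zero, so by exactness $H_{n+1}(\cZ,\cC)$ injects into $H_{n+1}(\cX,\cA)\oplus H_{n+1}(\cY,\cB)$. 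Symmetrically, the trailing zero comes from the vanishing of the following term $H_{n-1}(\cZ,\cC)$, so that $H_n(\cX\cup\cY,\cA\cup\cB)$ surjects onto $0$. This produces precisely \eqref{eq:MV7}.

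The argument is routine once the four concentration statements are available, so there is no serious obstacle. The only genuine input is the concentration of $H_*(\cX\cup\cY,\cA\cup\cB)$, and the mild subtlety is that this cannot be extracted from the Mayer--Vietoris sequence alone — doing so would leave the degree $n+2$ entry $H_{n+2}(\cX\cup\cY,\cA\cup\cB)$ uncontrolled (it only injects into $H_{n+1}(\cZ,\cC)$). One must instead pass through the decomposition \eqref{eq:directsumdecomp2} and the known $(n-2)$-connectivity and $n$-dimensionality of the Milnor fibre $F$, which are exactly what force that top entry to vanish.
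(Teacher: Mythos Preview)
Your proof is correct and follows essentially the same route as the paper: invoke Lemma~\ref{l:concentration}, \S\ref{ss:term}, and Lemma~\ref{p:concentr} for the concentration of $H_*(\cZ,\cC)$, $H_*(\cX,\cA)$, $H_*(\cY,\cB)$, and then use the direct sum decomposition \eqref{eq:directsumdecomp2} together with the concentration of $\tilde H_*(F)$ in degrees $n-1,n$ to kill $H_{n+2}(\cX\cup\cY,\cA\cup\cB)$. Your explicit remark that this last vanishing cannot be extracted from the Mayer--Vietoris sequence alone, and must come through \eqref{eq:directsumdecomp2}, is exactly the point the paper is making (more tersely) when it writes ``Following \eqref{eq:directsumdecomp2} \ldots''.
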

\begin{proof}
Lemma \ref{l:concentration}, \S \ref{ss:term} and Lemma \ref{p:concentr}
show that the  terms $H_{*}(\cX ,\cA)$, $H_{*}(\cY ,\cB)$ and $H_{*}(\cZ ,\cC)$ of the Mayer-Vietoris sequence \eqref{eq:mv} are concentrated only in dimensions $n$ and $n+1$.
Following \eqref{eq:directsumdecomp2} and since $\tilde H_{*}(F)$ is concentrated  in levels $n-1$ and $n$, we obtain that $H_{n+2}(\cX \cup \cY , \cA \cup \cB) =0$.
\end{proof}
The first 3 terms of \eqref{eq:MV7} are free. By the decomposition \eqref{eq:directsumdecomp2}, in order to find the homology of $F$ we thus need to compute $H_{k}(\cX \cup \cY , \cA \cup \cB)$ for $k=n, n+1$, since the others are zero. 
In the remainder of this paper we find information only about $H_{n}(\cX \cup \cY , \cA \cup \cB)$. The knowledge of  its dimension is then enough for determining $H_{n}(F)$, by only using  the Euler characteristic formula (Prop. \ref{p:euler}).

%%%%%%%%%%%%%%%%%%%%%%%%%%%%%%%%%%%%%%%%%%%%%%%%%%%%%%%%%%%%
\section{The homology group $H_{n-1}(F)$}\label{ss:proofmain}
%%%%%%%%%%%%%%%%%%%%%%%%%%%%%%%%%%%%%%%%%%%%%%%%%%%%%%%%%%%%

We concentrate on the term $H_{n}(\cX \cup \cY , \cA \cup \cB) \simeq \tilde H_{n-1}(F)$.
We  need the relative version of the ``variation-ladder'', an  exact sequence found in \cite[Theorem 5.2,  p. 456-457]{Si3}. This sequence has an important overlap with our relative Mayer-Vietoris sequence \eqref{eq:MV7}.

\begin{proposition} \label{p:varladderRel}\cite[Proposition 5.2]{ST-vh} 
For any point $q \in Q$, the sequence
\[ \begin{array}{l}   
0 \rightarrow H_{n+1}(\cA_q,\partial_2 \cA_q) \rightarrow \mathop{\oplus} _{s \in S_q} H_{n+1}(\cZ_s,\cC_s)
\rightarrow H_{n+1} (\cX_q,\cA_q) \rightarrow  \ \ \ \ \ \\
 \ \ \ \ \  \rightarrow H_{n}(\cA_q,\partial_2 \cA_q) \rightarrow \  \mathop{\oplus}_{s \in S_q} H_{n}(\cZ_s,\cC_s) \ 
\rightarrow  \ H_{n} (\cX_q,\cA_q)  \ 
\rightarrow  \ 0
   \end{array} \]
is exact for $n\ge 2$.   
\fin
\end{proposition}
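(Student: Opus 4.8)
The plan is to realize the six-term sequence as the outcome of a commutative ladder assembled from three families of exact sequences localized at $q$, after first reducing the $(\cX_q,\cA_q)$-terms to the homology of the fibre alone. Since the total space $\cX_q = f^{-1}(\Delta_0)\cap B_q$ is contractible (conic structure of the Milnor data at $q$), the long exact sequence of the pair yields canonical isomorphisms $H_{k}(\cX_q,\cA_q)\cong \tilde H_{k-1}(\cA_q)$ for $k\ge 2$; in particular these groups vanish outside $\{n,n+1\}$ because $\cA_q$, being the Milnor fibre of a germ with one-dimensional singular locus, is $(n-2)$-connected. Recall also that $\partial_2\cA_q = \bigsqcup_{s\in S_q}\cC_s$, that each $\cC_s$ is the $\tF_s$-bundle over the corresponding circle of the link of $(\Sigma,q)$, and that $\cZ_s$ is the $\tE_s$-bundle over the same circle, with $\cC_s = \cZ_s\cap\cA_q$.

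First I would write down the three ingredients to be spliced. (i) The long exact sequence of the pair $(\cA_q,\partial_2\cA_q)$, where $H_*(\partial_2\cA_q)=\oplus_{s} H_*(\cC_s)$. (ii) The relative Wang sequences \eqref{eq:wang2} together with Lemma \ref{l:concentration}, which compute $H_{n+1}(\cZ_s,\cC_s)\cong\ker (A_s-I)$ and $H_{n}(\cZ_s,\cC_s)\cong\coker (A_s-I)$ acting on $H_n(\tE_s,\tF_s)$, and in particular confine $H_*(\cZ_s,\cC_s)$ to degrees $n,n+1$. (iii) The long exact sequences of the pairs $(\cZ_s,\cC_s)$, in which $\cZ_s$ retracts onto its link circle, so that the homology of the circle bundles $\cC_s$ is governed by $A_s$ and the transversal data. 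The degree-preserving map $\oplus_{s} H_*(\cZ_s,\cC_s)\to H_*(\cX_q,\cA_q)$ is the one induced by the inclusions of pairs $(\cZ_s,\cC_s)\hookrightarrow(\cX_q,\cA_q)$, hence is already given; the map $H_*(\cA_q,\partial_2\cA_q)\to\oplus_{s} H_*(\cZ_s,\cC_s)$ is the variation-type morphism produced from a collar of $\partial_2\cA_q$ in $\cA_q$ together with the vertical monodromies $A_s$, exactly as in Siersma's variation ladder.

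Next I would feed these into one commutative ladder and extract the asserted sequence by a snake-lemma/diagram chase. The concentration statements do the pruning: all three relative homologies $H_*(\cX_q,\cA_q)$ (by \S\ref{ss:term}), $H_*(\cZ_s,\cC_s)$ (Lemma \ref{l:concentration}), and $H_*(\cA_q,\partial_2\cA_q)$ — the last checked to be concentrated in degrees $n,n+1$ via Lefschetz duality on $\cA_q$ combined with the connectivity above — vanish outside $\{n,n+1\}$, and this collapses the ladder to exactly six nonzero spots. The left-hand $0$ then records the injectivity of $H_{n+1}(\cA_q,\partial_2\cA_q)\to\oplus_{s} H_{n+1}(\cZ_s,\cC_s)$, which holds because the only group mapping into it, namely $H_{n+2}(\cX_q,\cA_q)$ via the connecting homomorphism, is zero; the right-hand $0$ records the surjectivity of $\oplus_{s} H_n(\cZ_s,\cC_s)\to H_n(\cX_q,\cA_q)$, which holds because $H_{n-1}(\cA_q,\partial_2\cA_q)=0$. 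The hypothesis $n\ge 2$ enters precisely through the validity of the relative Wang sequence \eqref{eq:wang2} and of the concentration statements, which fail in lower dimensions.

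I expect the genuine obstacle to be ingredient (iii) of the assembly: constructing the variation morphism $H_*(\cA_q,\partial_2\cA_q)\to\oplus_{s} H_*(\cZ_s,\cC_s)$ intrinsically and verifying that the resulting ladder commutes, so that the spliced sequence is exact at all six positions. This is the heart of the relative variation ladder. The characteristic downward degree shift between the $(\cX_q,\cA_q)$-terms and the $(\cA_q,\partial_2\cA_q)$-terms originates in the disk/circle direction $\Delta_0$ rather than in duality inside the fibre, and is exactly the content transported from \cite[Theorem 5.2]{Si3} to the relative setting of \cite[Proposition 5.2]{ST-vh}. Once the ladder and its commutativity are in place, the exactness together with the truncation above is a formal diagram chase.
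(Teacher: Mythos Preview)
The paper does not supply its own proof of this proposition: it is quoted from \cite[Proposition~5.2]{ST-vh}, which in turn is the relative version of the variation ladder of \cite[Theorem~5.2]{Si3}. Your sketch is essentially a description of that same strategy --- set up a commutative ladder built from the local exact sequences at $q$, use the concentration of all three relative homologies in degrees $n,n+1$ to prune it to six terms, and read off the exact sequence --- and you correctly isolate the genuine content as the construction of the variation map $H_*(\cA_q,\partial_2\cA_q)\to\bigoplus_{s}H_*(\cZ_s,\cC_s)$ together with the commutativity of the ladder. So your plan matches the intended route.

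Two small points. First, your appeal to ``Lefschetz duality on $\cA_q$'' for the concentration of $H_*(\cA_q,\partial_2\cA_q)$ is not quite right as stated: $\partial_2\cA_q$ is only a piece of $\partial\cA_q$, so plain Lefschetz duality does not apply directly. One gets the concentration instead from the long exact sequence of the triple $(\cA_q,\partial\cA_q,\partial_1\cA_q)$ together with the fact (from \cite{Si3}) that $(\cA_q,\partial_1\cA_q)$ is $(n-1)$-connected. Second, the long exact sequence you are implicitly invoking --- with connecting map $H_{k+1}(\cX_q,\cA_q)\to H_k(\cA_q,\partial_2\cA_q)$ --- is not a standard triple sequence; it is precisely the output of the variation ladder, so its existence is the theorem rather than an input. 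A cleaner way to package the argument is via the triple $(\cX_q,\cA_q\cup\cZ,\cA_q)$: excision gives $H_*(\cA_q\cup\cZ,\cA_q)\cong\bigoplus_s H_*(\cZ_s,\cC_s)$, and the remaining task is to identify $H_k(\cX_q,\cA_q\cup\cZ)$ with $H_{k-1}(\cA_q,\partial_2\cA_q)$, which is where the variation construction enters. That identification is the step carried out in \cite{ST-vh}.
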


\subsection{The image of $j$} \label{ss:surj}

We focus on the map $  j = j_1 \oplus j_2$ which occurs in the 6-term exact sequence \eqref{eq:MV7}, more precisely on the following exact sequence:
\begin{equation} \label{eq:j}
  H_{n}(\cZ,\cC) \stackrel{j}{\to} H_{n}(\cX,\cA) \oplus H_{n}(\cY,\cB)\to H_n(F)\to 0.
\end{equation}
since we have the isomorphism:
\begin{equation} 
H_{n-1}(F) \simeq \coker j.
\end{equation} 
Therefore full information about $j$ makes is possible to compute $H_{n-1}(F)$. But although $j$ is of geometric nature, this information is not always easy to obtain. Below we treat its two components in separately. After that we will make two statements (Theorems \ref{t:bound} and \ref{p:conc}) of a more general type.

 %%%%%%%%%%%%%%%%%%%%%%%%%%%%%%%%%%%%%%%%%%%%%%%
\subsubsection{The first component $j_1: H_{n}(\cZ,\cC) \to H_{n}(\cX,\cA)$} \label{sss:first} \ \\
Note that, as shown above, we have the following direct sum decompositions of the source and the target:
\[ \begin{array}{l}
  H_{n}(\cZ,\cC) = \oplus_{q\in Q} \oplus_{s\in S_q}  H_{n}(\cZ_s,\cC_s) \oplus \oplus_{i\in I}H_n(\cZ_{y_{i}},\cC_{y_{i}})                 , \\
  H_{n}(\cX,\cA) = \oplus_{q\in Q} H_n(\cX_q,\cA_q)  \oplus \oplus_{i\in I} H_n(\cX_{y_{i}},\cA_{y_{i}})  .
\end{array}
\]

 As shown in Proposition \ref{p:varladderRel}, at the special points $q\in Q$ we have surjections:
$\mathop{\oplus}_{s \in S_q}  H_{n}(\cZ_s,\cC_s)  \rightarrow H_{n} (\cX_q,\cA_q)$  and moreover
$H_n(\cZ_y,\cC_y) \rightarrow H_n(\cX_y,\cA_y)$ is an isomorphism.
We conclude to the surjectivity of the morphism $j_1$ and to the cancellation of the contribution of the points $y_{i}$ for $\coker j$.

%%%%%%%%%%%%%%%%%%%%%%%%%%%%%%%%%%%
\subsubsection{The second component $j_2: H_{n}(\cZ,\cC) \to  H_{n}(\cY,\cB)$}  \ \\
Both sides are described with a relative CW-complex as explained in \S \ref{ss:cw2}. At the level of $n$-cells there are $\tmu_s$  $n$-cell generators of $H_{n}(\cZ_s,\cC_s)$ for each $s \in S_q$ and any $q\in Q$. Each of these generators is mapped bijectively to the single cluster of $n$-cell generators attached to the reference fibre $\tF_{i}$ (which is the fibre above the common point $z_i$ of the loops).
The restriction ${j_2}_| :  H_{n}(\cZ_s,\cC_s) 
	\to  H_{n}(\cY_i,\cB_i)$ is a projection  for any loop $s$ in $\Sigma_i$ and $q\in Q_{i}$,  or if instead of $s$ we have $y_{i}$,
since we add extra relations to $\bZ^{\tmu}/\langle A_s-I \rangle$ in order to get $\bZ^{\tmu_i} /  \langle \im (A_w-I) \mid  w\in W_i \rangle  =H_{n}(\cY_i,\cB_i)$.
%Note that the indices run over all loops of $\Sigma_{i}^{*}$.
 We summarize the above surjections as follows:

\begin{lemma} \label{l:surjective} \rm (``Strong surjectivity'') \it
\begin{enumerate}
\item Both $j_1$ and $j_2$ are surjective. 
\item The restriction ${j_{2}}_{|} :   H_{n}(\cZ_s,\cC_s)  \rightarrow H_{n} (\cY_{i},\cA_i)$    is surjective for any $s \in S_q$ such that $q\in Q\cap \Sigma_{i}$.
\item  The restriction  $j_{1}| \oplus_{s\in S_{q}}H_n(\cZ_s,\cC_s) \rightarrow H_n(\cX_{q},\cA_{q})$ is surjective, for any $q\in Q$.
\end{enumerate}
\fin
\end{lemma}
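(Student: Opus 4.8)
The plan is to assemble the three claims from the computations already carried out in \S \ref{sss:first} and in the discussion of the second component, reading them through the explicit relative CW-models of \S \ref{ss:cw} and \S \ref{ss:cw2}. First I would dispatch part (3), since it is the most directly available: it is exactly the surjectivity of the final arrow $\oplus_{s\in S_q} H_n(\cZ_s,\cC_s) \to H_n(\cX_q,\cA_q) \to 0$ in the relative variation-ladder of Proposition \ref{p:varladderRel}, which holds for each $q \in Q$ when $n \ge 2$. Nothing further is needed here.

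Next, part (2). By Lemma \ref{l:concentration} the source is $H_n(\cZ_s,\cC_s) \cong \coker(A_s - I) = \bZ^{\tmu_i}/\im(A_s - I)$, while by Lemma \ref{p:concentr}(2) the target is $H_n(\cY_i,\cB_i) \cong \bZ^{\tmu_i}/\langle \im(A_w - I) \mid w \in W_i\rangle$. Since $s \in S_i \subseteq W_i$, the subgroup of relations defining the target contains $\im(A_s - I)$, so there is a canonical surjection $\bZ^{\tmu_i}/\im(A_s-I) \twoheadrightarrow \bZ^{\tmu_i}/\langle\im(A_w-I) \mid w \in W_i\rangle$. I would then argue that the geometric restriction ${j_2}_|$ coincides with this canonical projection: in the CW-model of \S \ref{ss:cw2} the $\tmu_i$ $n$-cells of $\cZ_s$ are attached to the same reference fibre $\tF_i$ (the fibre over $z_i$) and are sent identically onto the single cluster of $n$-cells out of which $H_n(\cY_i,\cB_i)$ is built, so that quotienting by the extra relations coming from the remaining loops $w \in W_i \setminus \{s\}$ recovers exactly ${j_2}_|$. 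Surjectivity is then immediate.

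Finally, part (1). Surjectivity of $j_2$ follows from part (2): choosing, for each component $\Sigma_i$, one special point $q \in Q_i$ and one branch $s \in S_q$ with $s \subset \Sigma_i$ already gives a surjection onto the summand $H_n(\cY_i,\cB_i)$, and since both $H_n(\cZ,\cC)$ and $H_n(\cY,\cB) = \oplus_{i\in I} H_n(\cY_i,\cB_i)$ split as the corresponding direct sums, $j_2$ is onto. For $j_1$, I would combine part (3) with the observation from \S \ref{sss:first} that for the auxiliary punctures $y_i$ the map $H_n(\cZ_{y_i},\cC_{y_i}) \to H_n(\cX_{y_i},\cA_{y_i})$ is an isomorphism; summing the surjections over $q \in Q$ and the isomorphisms over $i \in I$ across the direct-sum decompositions of $H_n(\cZ,\cC)$ and $H_n(\cX,\cA)$ recorded in \S \ref{sss:first} yields surjectivity of $j_1$.

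The main obstacle I expect is the identification, in part (2), of the abstract algebraic quotient map with the geometric restriction ${j_2}_|$ induced by the inclusion of pairs $(\cZ_s,\cC_s) \hookrightarrow (\cY_i,\cB_i)$. This is not a formal consequence of the cokernel descriptions alone: it rests on the compatibility of the two CW-structures, namely that the inclusion carries the $n$-skeleton generators of $\cZ_s$ bijectively onto the common cluster of thimbles on $\tF_i$ and the $(n+1)$-cells over the loop $s$ onto the matching $(n+1)$-cells of $\cY_i$. Once this cell-level compatibility is checked — which is precisely how the model in \S \ref{ss:cw2} was set up, with all loops sharing the single reference fibre $\tF_i$ — the homological statement drops out, and the three parts of the lemma are established.
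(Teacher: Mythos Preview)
Your proposal is correct and follows essentially the same route as the paper: the lemma is presented there as a summary of the preceding discussion, which derives part (3) from the last arrow of the variation-ladder in Proposition \ref{p:varladderRel}, part (2) from the CW-model identification of ${j_2}_|$ with the canonical quotient $\bZ^{\tmu_i}/\im(A_s-I)\twoheadrightarrow \bZ^{\tmu_i}/\langle\im(A_w-I):w\in W_i\rangle$, and part (1) by assembling these over the direct-sum decompositions together with the $y_i$-isomorphisms. One small point: in deducing surjectivity of $j_2$ you pick a special point $q\in Q_i$ on each branch, but the paper also allows the auxiliary puncture $y_i$ in place of $s$, which covers the case where some $Q_i$ might be empty; you already invoke the $y_i$-pieces for $j_1$, so the same remark applies to $j_2$.
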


\begin{corollary} \label{c:surjective} 
\begin{enumerate}
\item If the restriction ${j_{2}}| \ker j_{1}$ is surjective, then $j$ is surjective.
\item If for each $i\in I$ there exists $q_{i}\in Q\cap \Sigma_{i}$ and some $s \in S_{q_{i}}$  such that 
$H_{n}(\cZ_s,\cC_s) \subset \ker j_{1}$ then $j$ is surjective. 
\end{enumerate}
\fin

\end{corollary}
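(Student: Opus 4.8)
The plan is to deduce both statements from the Strong Surjectivity Lemma \ref{l:surjective} by a purely algebraic argument on the direct-sum map $j = j_1 \oplus j_2$. Write $M := H_n(\cZ,\cC)$, $A := H_n(\cX,\cA)$ and $B := H_n(\cY,\cB)$, so that $j : M \to A \oplus B$ has components $j_1 : M \to A$ and $j_2 : M \to B$, both surjective by Lemma \ref{l:surjective}(1).

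For part (a) I would argue by a two-step lifting. Given an arbitrary target $(a,b) \in A \oplus B$, first use the surjectivity of $j_1$ to choose $x \in M$ with $j_1(x) = a$; then $j(x) = (a, j_2(x))$, so the first coordinate is already correct. It remains to correct the second coordinate by an element that does not disturb the first, and this is exactly what the hypothesis provides: since $j_2|_{\ker j_1}$ is surjective onto $B$, there is $y \in \ker j_1$ with $j_2(y) = b - j_2(x)$, whence $j(x+y) = (a, j_2(x)) + (0, b - j_2(x)) = (a,b)$. Thus $j$ is surjective. This step is elementary and I expect no difficulty.

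For part (b) I would verify that the stated hypothesis forces $j_2|_{\ker j_1}$ to be surjective, after which part (a) applies verbatim. The key is the block structure of $j_2$ recorded in \S\ref{ss:cw2} and in Lemma \ref{l:surjective}(2): for a loop $s \subset \Sigma_i$, the restriction of $j_2$ to the summand $H_n(\cZ_s,\cC_s)$ is a \emph{projection} onto the single summand $H_n(\cY_i,\cB_i)$ of $B = \oplus_{i\in I} H_n(\cY_i,\cB_i)$, and it is surjective onto that summand. Fixing for each $i$ the index $s = s_i \in S_{q_i}$ supplied by the hypothesis, the inclusion $H_n(\cZ_{s_i},\cC_{s_i}) \subset \ker j_1$ says that these surjections onto the individual summands are realized entirely inside $\ker j_1$, while the projection property makes them vanish on every other summand of $B$. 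Hence, given $(b_i)_{i\in I} \in \oplus_{i\in I} H_n(\cY_i,\cB_i)$, I choose $w_i \in H_n(\cZ_{s_i},\cC_{s_i}) \subset \ker j_1$ with $j_2(w_i) = b_i$ and set $w := \sum_i w_i \in \ker j_1$, so that $j_2(w) = (b_i)_i$. This gives surjectivity of $j_2|_{\ker j_1}$ onto all of $B$, and part (a) concludes.

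The only genuinely non-formal point, and the step I would watch most carefully, is the compatibility of $j_2$ with the direct-sum decomposition of $B$ in part (b): one must be sure that the image of $H_n(\cZ_{s_i},\cC_{s_i})$ lands in the $i$-th summand alone, with no leakage into the summands indexed by $i' \neq i$. This is precisely the projection statement established through the explicit CW-model in \S\ref{ss:cw2}, so I would cite that description rather than re-derive it.
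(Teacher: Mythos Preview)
Your proof is correct and follows the same approach as the paper: part (a) is argued identically via the two-step lifting $(a,b)\mapsto x$ then $x+y$, and part (b) is exactly what the paper means by ``follows immediately from Lemma~\ref{l:surjective}(b) and (a)'', only spelled out with the block decomposition of $j_2$ made explicit. Your extra care about the image of $H_n(\cZ_{s_i},\cC_{s_i})$ landing only in the $i$-th summand of $H_n(\cY,\cB)$ is justified by the geometric inclusion $\cZ_{s_i}\subset \cY_i$ recorded in \S\ref{ss:cw2}.
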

\begin{proof}
(a).  More generally, let   $j_{1}: M \to M_{1}$  and $j_{2}: M \to M_{2}$
be morphisms of  $\bZ$-modules  such that $j_{1}$ is surjective and consider the direct sum of them $j:= j_{1}\oplus j_{2}$.  
We assume that the restriction ${j_{2}}| \ker j_{1}$ is surjective onto $M_{2}$ and want to prove that $j$ is surjective. 

Let then $(a,b) \in M_{1}\oplus M_{2}$. There exists $x\in M$ such that $j_{1}(x)=a$, by the surjectivity of $j_{1}$. Let $b' := j_{2}(x)$. By our surjectivity assumption there exists $y\in \ker j_{1}$ such that $j_{2}(y) = b-b'$.
 Then  $j(x+y) = a+b$,  which proves the surjectivity of j.

\noindent
(b). follows immediately  from Lemma \ref{l:surjective}(b) and from the above (a).
\end{proof}

%%%%%%%%%%%%%%
\subsection{Effect of local system monodromies on $H_n(F)$}
Recall that $w\in W_{i}$ stands for some loop $s, g , u$ in $\Sigma_i^{*}$.

\begin{theorem}\label{t:bound}\
\begin{enumerate}
\item
If there is $w\in W_{i}$ such that $\det (A_w - I) \not=0$ then $\dim H_n(\cY_i,\cB_i) = 0$. \\
If such  $w\in W_{i}$ exists for any $i \in I$, then $b_{n-1}(F)= 0$.
\item If there is $w\in W_{i}$ such that $\det (A_w - I) = \pm 1$ then $H_n(\cY_i,\cB_i) = 0$. \\
If such  $w\in W_{i}$ exists for any $i \in I$, then $H_{n-1}(F) = 0$.
\item The following upper bound holds:
 \[b_{n-1}(F)\le \sum_{i\in I} \min_{w\in W_i}\dim \coker (A_w - I) \le \sum_{i \in I} \tmu_i.\]
\end{enumerate}
\end{theorem}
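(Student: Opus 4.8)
The plan is to reduce the whole theorem to the isomorphism $H_{n-1}(F)\simeq\coker j$ from \eqref{eq:j}, exploiting the surjectivity of $j_1$ furnished by the strong surjectivity Lemma \ref{l:surjective}(1). First I would isolate the underlying algebraic fact: if $j=j_1\oplus j_2\colon M\to M_1\oplus M_2$ is a morphism of $\bZ$-modules with $j_1$ surjective, then $\coker j\simeq\coker\bigl(j_2|_{\ker j_1}\bigr)=M_2/j_2(\ker j_1)$. This isomorphism is induced by the map $M_1\oplus M_2\to M_2/j_2(\ker j_1)$, $(a,b)\mapsto b-j_2(x)$ where $x$ is any element with $j_1(x)=a$; surjectivity of $j_1$ guarantees such an $x$ exists and makes the value independent of its choice, and one checks directly that the kernel of this map is exactly $\im j$. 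Crucially the argument is valid over $\bZ$ and needs no splitting of $j_1$. Applying it with $M_2=H_n(\cY,\cB)$ yields at once, since $\coker j$ is then a quotient of $H_n(\cY,\cB)$,
\[ b_{n-1}(F)=\rank H_{n-1}(F)=\rank\coker j\le\rank H_n(\cY,\cB)=\sum_{i\in I}\rank H_n(\cY_i,\cB_i). \]

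Next I would bound each summand using Lemma \ref{p:concentr}(2), which gives $H_n(\cY_i,\cB_i)=\bZ^{\tmu_i}/\langle\im(A_w-I)\mid w\in W_i\rangle$. For any single $w\in W_i$ this group is a quotient of $\coker(A_w-I)=\bZ^{\tmu_i}/\im(A_w-I)$, so $\rank H_n(\cY_i,\cB_i)\le\dim\coker(A_w-I)$; since $w$ is arbitrary this gives $\rank H_n(\cY_i,\cB_i)\le\min_{w\in W_i}\dim\coker(A_w-I)$, and the latter is trivially at most $\tmu_i$ because $\coker(A_w-I)$ is a quotient of $\bZ^{\tmu_i}$. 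Summing over $i$ and combining with the previous display proves part (c).

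Parts (a) and (b) I would then deduce as degenerate cases of the same estimates. For the local statements: if $\det(A_w-I)\ne0$ then $A_w-I$ has full rank, so $\coker(A_w-I)$ is finite and $H_n(\cY_i,\cB_i)$, being a quotient of it, has $\dim H_n(\cY_i,\cB_i)=0$; if $\det(A_w-I)=\pm1$ then $A_w-I$ is unimodular, hence $\im(A_w-I)=\bZ^{\tmu_i}$ and $H_n(\cY_i,\cB_i)=0$ integrally. For the global statements: if a $w$ as in (a) exists for every $i$, the bound of (c) collapses to $0$, giving $b_{n-1}(F)=0$; if a $w$ as in (b) exists for every $i$, then $H_n(\cY,\cB)=0$, and the isomorphism $\coker j\simeq\coker(j_2|_{\ker j_1})$ forces $H_{n-1}(F)=0$ as an integral statement, not merely a statement about ranks.

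Since the surjectivity of $j_1$ is already in hand, the steps reduce to linear algebra over $\bZ$, and I do not expect a deep obstacle. The point requiring the most care is the integral validity of the reduction $\coker j\simeq\coker(j_2|_{\ker j_1})$: it must be carried out over $\bZ$ rather than after tensoring with $\bQ$, since the sharp conclusion of part (b) is the vanishing of the full group $H_{n-1}(F)$ (including possible torsion) and not only of its rank $b_{n-1}(F)$.
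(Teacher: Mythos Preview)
Your proposal is correct and follows essentially the same line as the paper: both arguments rest on Lemma \ref{p:concentr}(b) for the description of $H_n(\cY_i,\cB_i)$ and on the surjectivity of $j_1$ from Lemma \ref{l:surjective} to bound $\coker j$ by $H_n(\cY,\cB)$. Your treatment is slightly more systematic in that you isolate and prove the integral isomorphism $\coker j\simeq M_2/j_2(\ker j_1)$ once and then read off (a), (b), (c) as special cases, whereas the paper argues the three parts separately (and passes to $\bQ$-coefficients for (c)); but the underlying idea is the same.
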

\begin{proof}
By Lemma \ref{p:concentr}(b). we have
$H_{n}(\cY_i,\cB_i) \simeq  \bZ^{\tmu_i} /  \langle \im (A_w-I) \mid  w\in W_i \rangle$, thus
the first parts of (a)  and (b)  follow. For the second part of (a), we have that $\dim H_n(\cY,\cB) = 0$, hence $\corank j = \corank j_{1} =0$. For the second part of (b), we have that $H_n(\cY,\cB) = 0$ and the surjectivity of the map $j$ of \eqref{eq:j} is equivalent to the fact that $j_{1}$ is surjective. 

\noindent
To prove (c), we consider  homology groups  with coefficients in $\bQ$. Since $j_{1}$ is surjective, the  image of $j$ contains all the
generators of  $H_n(\cX,\cA;\bQ)$. Hence $\dim \coker j \le \dim H_n(\cY,\cB)$.
\end{proof}
\begin{remark}
Notice the {\it effect of the strongest bound} in the above theorem. On each $\Sigma_i$ one could take an optimal loop, e.g. one with $\det (A_w - I) = \pm 1$.
Since in  the deformed case there may be less branches $\Sigma_{i}$, and  more special points and hence more vertical monodromies, these bounds may become much stronger than those in \cite{Si3}.
\end{remark}

\bigskip

%%%%%%%%%%%%%%%%%%%%%%%%%%%%%%%%%%%%%%%%%%%%%%%%%%%%%%%%%%%%%%%%%%%%%%%%%%%%%%%%%%%%%%%%%%%%
\subsection{Effect of the local fibres $\cA_q$}

\begin{theorem} \label{p:conc} Let $n\ge 2$.
\begin{enumerate}
\item
Assume that for each irreducible 1-dimensional component $\Sigma_i$ of $\Sigma$ there is a special singularity $q\in Q_{i}$ such that the $(n-1)$th homology group of its Milnor fibre is trivial, i.e.   $H_{n-1}(\cA_q)=0$. Then $H_{n-1}(F) =0$.

\noindent
If in the above assumption we replace  $H_{n-1}(\cA_q)=0$ by $b_{n-1}(\cA_q) = 0$, then we get  $b_{n-1}(F) = 0$.
\item
Let $Q' := \{q_1,\ldots, q_m\}\subset Q$ be some (minimal) subset of special points such that each branch $\Sigma_{i}$ contains at least one of its points.  Then:
\[ b_{n-1}(F) \le \dim H_n(\cX_{q_1},\cA_{q_1}) + \cdots + \dim H_n(\cX_{q_m},\cA_{q_m}).\]
%(A clever choice of the point set can give a low bound).
\end{enumerate}
\end{theorem}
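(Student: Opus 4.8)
The plan is to funnel both parts through one identification that connects the abstract relative group $H_n(\cX_q,\cA_q)$ — the object the variation ladder and Mayer--Vietoris machinery actually produce — to the concrete hypothesis on $H_{n-1}(\cA_q)$. First I would observe that $\cX_q = f^{-1}(\Delta_0)\cap B_q$ is the total space of the Milnor tube of $f$ at $q$ (recall $f(q)=0$), hence contractible. Plugging this into the long exact sequence of the pair $(\cX_q,\cA_q)$ kills $H_n(\cX_q)$ and $H_{n-1}(\cX_q)$ for $n\ge 2$ and yields the isomorphism
\[ H_n(\cX_q,\cA_q)\;\cong\;H_{n-1}(\cA_q), \]
valid already over $\bZ$. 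This is the bridge: it converts $H_{n-1}(\cA_q)=0$ into $H_n(\cX_q,\cA_q)=0$, converts $b_{n-1}(\cA_q)=0$ into $\dim H_n(\cX_q,\cA_q)=0$, and identifies the right-hand side of the bound in (b) with the sum $\sum_j\dim H_{n-1}(\cA_{q_j})$ announced in the introduction.

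For part (a) I would work through $\coker j$, using $H_{n-1}(F)\simeq\coker j$ with $j=j_1\oplus j_2$. Because the pairs $(\cX_q,\cA_q)$ and $(\cZ_s,\cC_s)$ are localized at the distinct points of $Q$, the map $j_1$ is block-diagonal, its block at $q$ being the surjection $\phi_q\colon\oplus_{s\in S_q}H_n(\cZ_s,\cC_s)\to H_n(\cX_q,\cA_q)$ of Proposition \ref{p:varladderRel}. If $H_{n-1}(\cA_q)=0$ then, by the bridge, $H_n(\cX_q,\cA_q)=0$, so $\phi_q=0$ and hence $H_n(\cZ_s,\cC_s)\subset\ker j_1$ for every branch $s\in S_q$. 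Choosing for each component $\Sigma_i$ one such $q\in Q_i$ and one $s\in S_q$, Corollary \ref{c:surjective}(b) makes $j$ surjective over $\bZ$, so $H_{n-1}(F)=\coker j=0$. The weaker conclusion $b_{n-1}(F)=0$ from $b_{n-1}(\cA_q)=0$ is then just the special case of (b) in which every term of the bound vanishes.

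For part (b) I would pass to $\bQ$-coefficients (legitimate since tensoring with $\bQ$ preserves exactness of the variation ladder and of \eqref{eq:MV7}) and first record the formal reduction: for any $j=(j_1,j_2)\colon Z\to X\oplus Y$ with $j_1$ surjective, the assignment $y\mapsto[(0,y)]$ defines a surjection $Y\to\coker j$ with kernel $j_2(\ker j_1)$, so that $\coker j\cong\coker\!\bigl(j_2|_{\ker j_1}\colon\ker j_1\to Y\bigr)$. Here $Y=\oplus_i H_n(\cY_i,\cB_i)$ and, by block-diagonality, $\ker j_1=\oplus_{q\in Q}\ker\phi_q$. Writing $Z^{Q'}:=\oplus_{q\in Q'}\oplus_{s\in S_q}H_n(\cZ_s,\cC_s)$ and $K':=\oplus_{q\in Q'}\ker\phi_q\subset\ker j_1$, the covering hypothesis combined with Lemma \ref{l:surjective}(b) shows that $j_2|_{Z^{Q'}}\colon Z^{Q'}\to Y$ is already surjective, since for each $i$ some branch at a point of $Q'\cap\Sigma_i$ maps onto all of $H_n(\cY_i,\cB_i)$. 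Consequently $Y/j_2(K')$ is a quotient of $Z^{Q'}/K'\cong\oplus_{q\in Q'}H_n(\cX_q,\cA_q)$, and since $j_2(K')\subseteq j_2(\ker j_1)$ I would conclude
\[ b_{n-1}(F)=\dim\coker j\;\le\;\dim\bigl(Y/j_2(K')\bigr)\;\le\;\sum_{q\in Q'}\dim H_n(\cX_q,\cA_q), \]
which is the claimed bound (minimality of $Q'$ is not used; it only sharpens the estimate).

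The step I expect to be the main obstacle is the bookkeeping of the block structure at special points lying on several branches. I must hold apart the two uses of $S_q$ — all local branches of $\Sigma$ at $q$, which govern $j_1$ and the variation ladder, versus only the branches of a fixed $\Sigma_i$, which govern the surjectivity of $j_2$ onto $H_n(\cY_i,\cB_i)$ — and check that $\ker j_1$ genuinely splits as $\oplus_q\ker\phi_q$ (the $y_i$-blocks being isomorphisms contribute nothing), so that $K'$ sits inside it. Once this is pinned down, the remaining counts are immediate from rank--nullity, and the $\bZ$-versus-$\bQ$ passage is routine.
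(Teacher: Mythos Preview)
Your proof is correct and follows essentially the same route as the paper: both parts go through $\coker j$ and invoke Corollary~\ref{c:surjective} and Lemma~\ref{l:surjective} on the same inputs, and your explicit identification $H_n(\cX_q,\cA_q)\cong H_{n-1}(\cA_q)$ via contractibility of $\cX_q$ supplies a bridge the paper uses tacitly. The only cosmetic difference is in (b), where the paper projects $H_n(\cX,\cA)$ onto its $q\notin Q'$ summand to bound $\dim\im j$ from below, whereas you dualize to $\coker j\cong H_n(\cY,\cB)/j_2(\ker j_1)$ and bound this quotient from above --- the two dimension counts are equivalent.
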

\begin{proof}(a).  We use  \eqref{eq:j} in order to estimate the dimension of the image of $j= j_1 \oplus j_2$. 
If  there is a $q\in Q$ such that $H_n(\cX_q,\cA_q)= 0$ then $\ker j_{1}$ contains  $\oplus_{s \in S_q} H_n(\cZ_s,\cC_s)$.
Since $ Q'$ meets all components $\Sigma_i$, statement (a) follows from Corollary \ref{c:surjective}(b). The second claim of (a) follows by considering homology over $\bQ$.

\noindent
(b). We work again with homology over $\bQ$. 
We consider the projection on  a direct summand $\pi : H_n(\cX,\cA)\to \oplus_{q\not\in Q'}H_n(\cX_{q},\cA_{q})$ and the composed map $J_{1}:= \pi \circ j_{1}$. Then the restriction $j_{2}| \ker J_{1}$ is surjective, which by Corollary \ref{c:surjective}(a), means that $J_{1}\circ j_{2}$ is surjective. Then the result follows from the obvious inequality $\dim (\im J_{1}\circ j_{2}) \le \dim \im j$ by counting dimensions.
\end{proof}

\begin{remark}
Also here we have the {\it effect of the strongest bound}. This works at best if one chooses an optimal  or minimal $Q'$ (see e.g. Figure \ref{f:Qpoints}).
In the irreducible case,   $H_{n-1}(\cA_q) = 0$ for at least one $q\in Q$ already implies the triviality $H_{n-1}(F)=0$.
\begin{figure}[htbp]
    \centering
        \includegraphics[width=6cm]{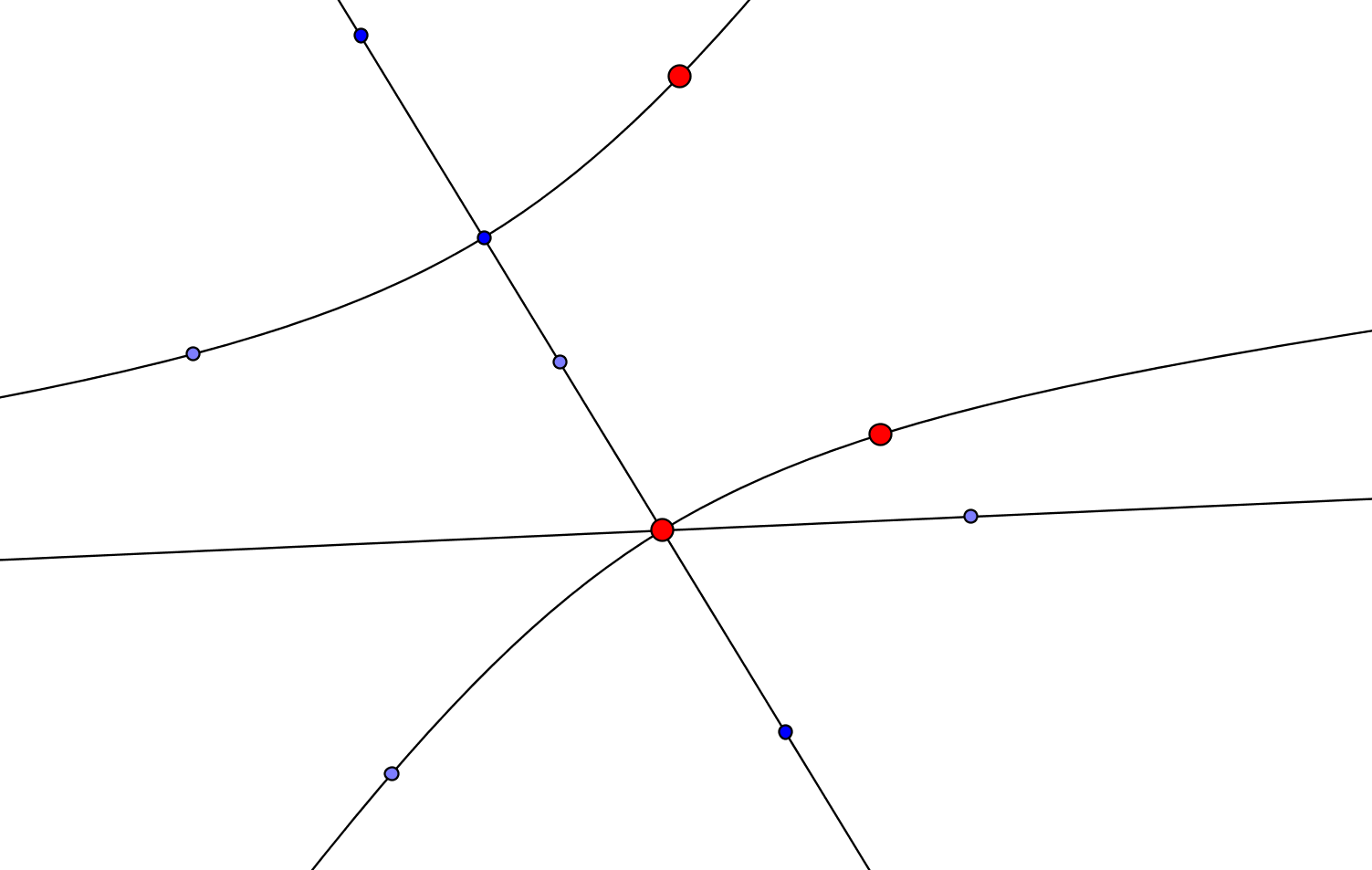}
    \caption{A choice of Q-points}
    \label{f:Qpoints}
\end{figure}
\end{remark}

\begin{corollary} (Bouquet Theorem) 
If $n \geq 3$ and
\begin{itemize}
\item[(a)] If for any $i \in I$ there is $w\in W_{i}$ such that $\det (A_w - I) = \pm 1$, or 
\item[(b)] If for every $\Sigma_i$ there is a special singularity $q\in Q_{i}$ such that  $H_{n-1}(\cA_q)=0$
\end{itemize}
then  
$$ F  \stackrel{\h}{\simeq}   S^n \vee \cdots \vee S^n.$$
\end{corollary}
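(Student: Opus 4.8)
\emph{Proof proposal.} The plan is to observe that both hypotheses funnel into the single homological consequence $H_{n-1}(F)=0$, and then to run the standard argument showing that an $(n-1)$-connected $n$-dimensional complex with free top homology is a bouquet of $n$-spheres. First I would record the vanishing. Under hypothesis (a), the second part of Theorem \ref{t:bound}(b) gives exactly $H_{n-1}(F)=0$; under hypothesis (b), the first part of Theorem \ref{p:conc}(a) gives the same. It is essential that in both cases these are the \emph{integral} vanishing statements (and not merely $b_{n-1}(F)=0$), since a wedge of spheres has torsion-free homology and we want to pin down the homotopy type, not just the Betti numbers.

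Next I would upgrade this to a connectivity statement. As recalled in the introduction, $F$ is $(n-2)$-connected; in particular, since $n\ge 3$ forces $n-1\ge 2$, the space $F$ is simply connected and the Hurewicz theorem applies in degree $n-1$, yielding $\pi_{n-1}(F)\cong H_{n-1}(F)=0$. Hence $F$ is $(n-1)$-connected. This is precisely the point at which the hypothesis $n\ge 3$ enters: it guarantees simple connectivity, so that the homological vanishing translates into a vanishing of the homotopy group $\pi_{n-1}$.

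To finish, I would exhibit the equivalence explicitly. Since $F$ is $(n-1)$-connected and of dimension $n$, its reduced homology is concentrated in degree $n$, where $H_n(F)=\bZ^{\mu_n}$ is free. By Hurewicz, $\pi_n(F)\cong H_n(F)$, so choosing maps $S^n\to F$ representing a basis and wedging them produces a map $\varphi:\bigvee_{\mu_n} S^n\to F$. By construction $\varphi$ induces an isomorphism on $H_n$, and it is trivially an isomorphism in every other degree because both source and target are $(n-1)$-connected and $n$-dimensional. As both are simply connected CW-complexes, the homology form of the Whitehead theorem promotes $\varphi$ to a homotopy equivalence, i.e. $F\stackrel{\h}{\simeq} S^n\vee\cdots\vee S^n$ with $\mu_n=b_n(F)$ summands, this number being computable from the Euler characteristic formula of Proposition \ref{p:euler}.

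I do not anticipate a genuine obstacle here: the entire analytic content sits in the earlier theorems that produce $H_{n-1}(F)=0$, while the remainder is a formal consequence of Hurewicz and Whitehead. The only care needed is the bookkeeping around the standing assumption $n\ge 3$, which is exactly what makes $F$ simply connected and hence allows both Hurewicz and the homology Whitehead theorem to be applied.
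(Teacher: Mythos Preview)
Your proposal is correct and follows essentially the same route as the paper: invoke Theorems \ref{t:bound}(b) and \ref{p:conc}(a) to obtain $H_{n-1}(F)=0$, then use that $F$ is a simply connected $n$-dimensional CW-complex together with Hurewicz and Whitehead to conclude. The paper compresses your last two paragraphs into a citation of Milnor's argument (\cite{Mi}, Theorem~6.5), which is precisely the Hurewicz-plus-Whitehead step you have written out explicitly.
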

\begin{proof}
From Theorems (\ref{t:bound}b) or (\ref{p:conc}a) follows $H_{n-1}(F)=0$. Since $F$ is a simply connected $n$-dimensional CW-complex the statement follows from Milnor's argument (\cite{Mi}, theorem 6.5) and Whitehead's theorem.
\end{proof}

%%%%%%%%%%%%%%%%%%%%%%%%%%%%%%%%%%%%%%%%%%%%%%%%%%%%%%%%%%%%%%%%%%%%%%%%%%%%%%%%%%%%%%%%
\section{Examples }\label{remarks}
%%%%%%%%%%%%%%%%%%%%%%%%%%%%%%%%%%%%%%%%%%%%%%%%%%%%%%%

\subsection{Singularities with transversal type $A_1$ }
The case when $\Sigma$ is a smooth line was considered in \cite{Si1} and later generalized to $\Sigma$ a 1-dimensional complete intersection (icis) \cite{Si-icis}. It uses an admissible deformation with only $D_{\infty}$-points. The main statement is:
\begin{itemize}
\item[(a)] $ F \stackrel{\h}{\simeq} S^{n-1} $  if   $ \# D_{\infty} = 0 $,
\item[(b)]  $ F \stackrel{\h}{\simeq} S^n \vee \cdots \vee S^n $ else.
\end{itemize}
Since $D_{\infty}$-points have $H_{n-1}(\cA_q)=0$, our Theorem \ref{p:conc} provides a proof of this statement on the level of homology.
\smallskip
If $\Sigma$ is not an icis, more complicated situations occur. For details about the following example, cf \cite{Si-icis}.
\begin{itemize}
\item[(i)] $f = xyz$, called $T_{\infty,\infty,\infty}$ : $\Sigma$ is the union of 3 coordinate axis. $F \cong S^1 \times S^1$, so $b_1(F) = 2$, $b_2(F)=1$ and all $A_u = I$.
\item[(ii)] $f= x^2y^2 + y^2z^2 + x^2z^2$  has $F \cong S^2 \vee \cdots \vee S^2$.
The admissible deformation $f_s = f + s x y z$ has the same $\Sigma$ as $f= xyz$, but now  with 3 $D_{\infty}$-points on each component of $\Sigma$ and one $T_{\infty,\infty,\infty}$-point in the origin. Our Theorem \ref{p:conc} therefore states $H_{1}(F)=0$.
A real picture of $f_s=0$ contains the Steiner surface, for $s\not=0$ small enough (Figure \ref{fig:steiner}). That $H_{2}(F) = \bZ^{15}$ follows from $\chi(F) =16$ computed via Proposition \ref{p:euler}.

\begin{figure}[h]
  \begin{subfigure}[b]{0.25\textwidth}
    \includegraphics[width=\textwidth]{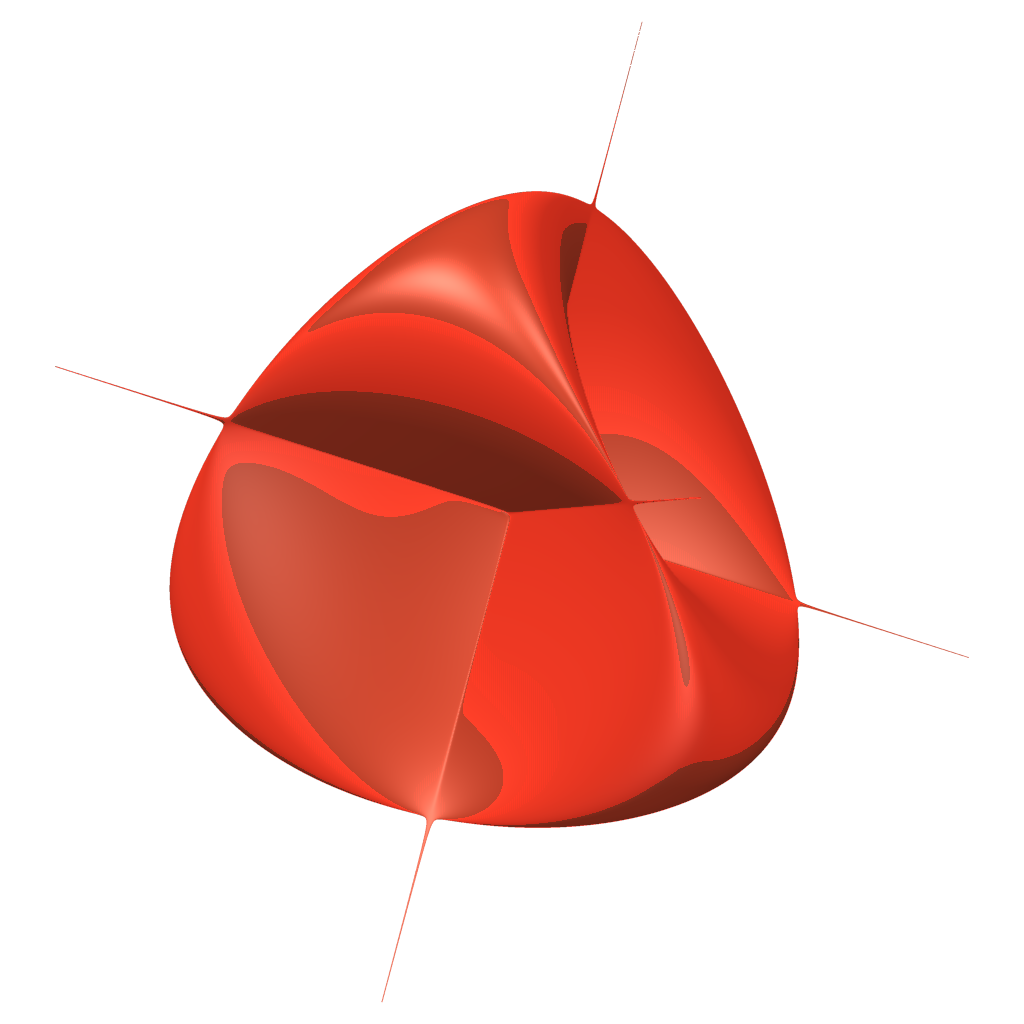}
   \caption{Steiner Surface}
     \label{fig:steiner}
  \end{subfigure}
	 \begin{subfigure}[b]{0.25\textwidth}
    \includegraphics[width=\textwidth]{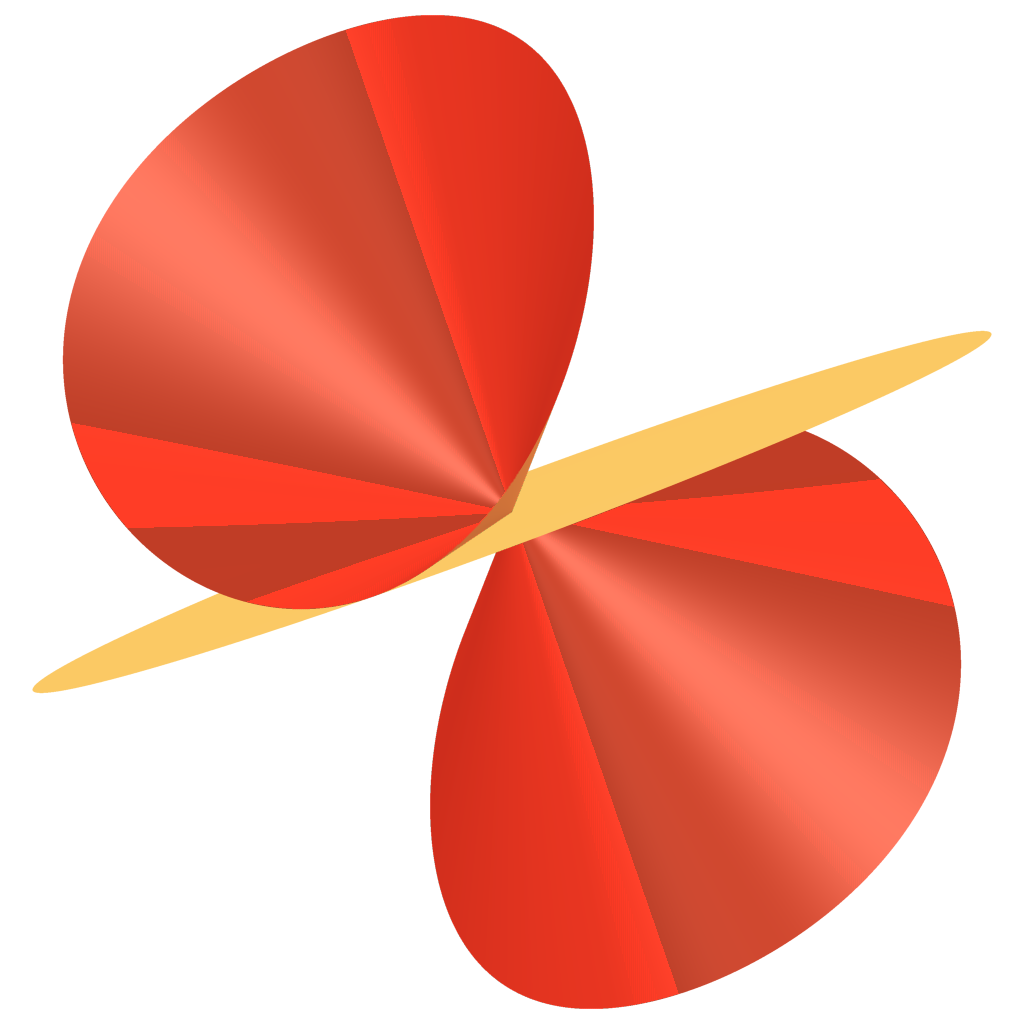}
    \caption{Singularity $F_1 A_3$}
    \label{fig:dJnewF1A3}
  \end{subfigure}
  \begin{subfigure}[b]{0.25\textwidth}
    \includegraphics[width=\textwidth]{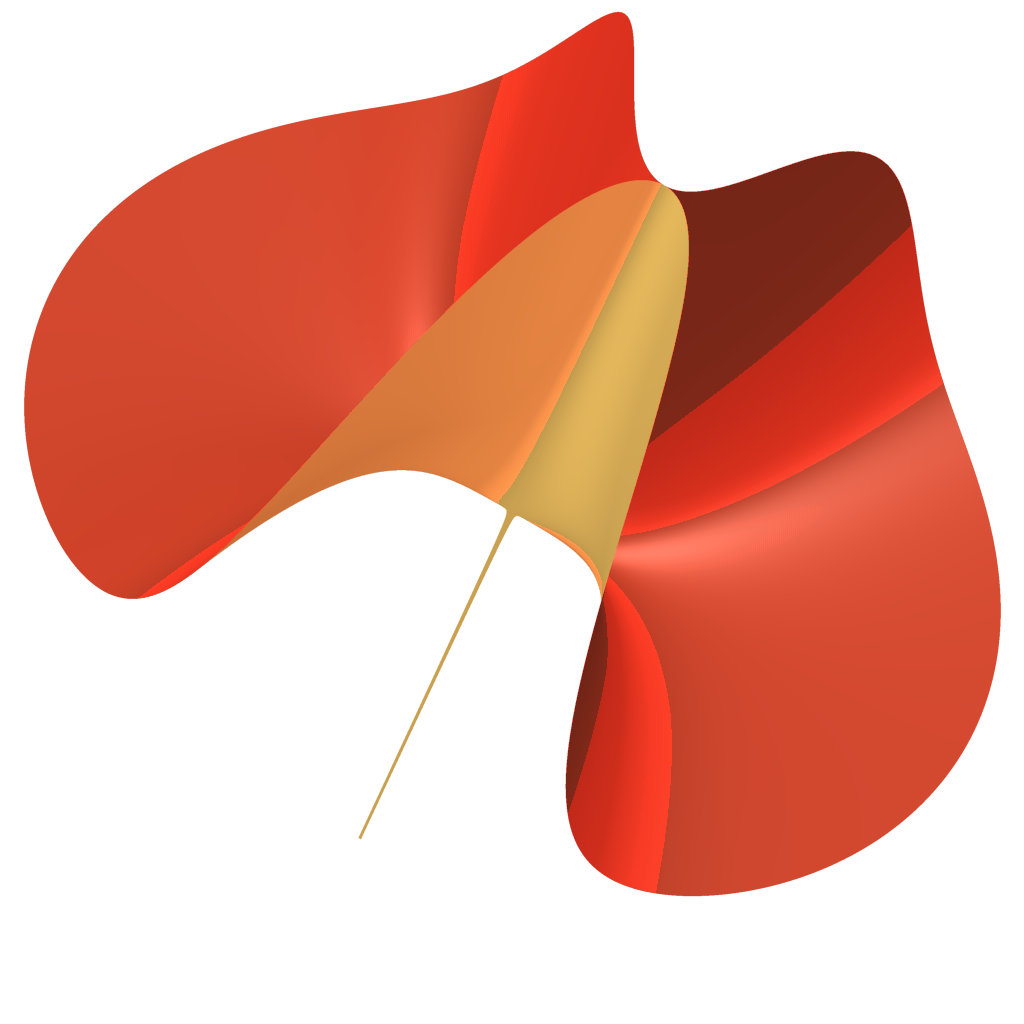}
    \caption{Singularity $F_2A_3$}
    \label{fig:DJF2A3}	
  \end{subfigure}
\caption{Several Singularites (\tiny{produced with Surfer software})}
\end{figure}
\end{itemize}

\subsection{Transversal type $A_2,A_3,D_4,E_6,E_7,E_8$, De Jong List}
In \cite{dJ} there is a detailed description of singularities with singular set a smooth line
and transversal type $A_2,A_3,D_4,E_6,E_7,E_8$. 
His list illustrates and confirms our statements at the level of homology.

We will treat below in more detail the case $f: \bC^3 \to \bC$ with transversal type $A_3$. (By adding squares, this also illustrates  $f: \bC^{n+1} \to \bC$.) Any singularity of this type can be deformed into
\begin{itemize}
\item[$F_1 A_3$]: $ f = x z^2 + y^2 z  \; ; \;  F \stackrel{\h}{\simeq} S^1$  (figure \ref{fig:dJnewF1A3})
\item [$F_2A_3$]:   $ f = x y^4 + z^2  \; ; \;  F \stackrel{\h}{\simeq} S^2$  (figure \ref{fig:DJF2A3})
\end{itemize}
 De Jong's observation is that for any line singularity of transversal type $A_3$we have:
\begin{itemize}
\item[(a)] $ F \stackrel{\h}{\simeq} S^{n-1} \vee S^n \cdots \vee S^n $  if   $\# F_2A_3 = 0 $,
\item[(b)]   $ F \stackrel{\h}{\simeq} S^n \vee \cdots \vee S^n $ else.
\end{itemize}
In homology, (b) follows directly from our concentration result \ref{p:conc}. The homology version of (a) takes more efforts.
We demonstrate  this in the following example only.
First we mention that for $F_1A_3$ the vertical monodromy $A$ is equal to the Milnor monodromy $h$. This follows from the fact that $f = x z^2 + y^2 z$ is homogeneous of degree $d=3$ and Steenbrink's remark \cite{St}  that $Ah^{d}=I$ and that $h^4 = I$.
The matrix of $h$ is:
$$ 
\begin{pmatrix}
1 &  1 & \ 1\\
-1 &  0 & 0\\
0  &  -1 & 0
\end{pmatrix}
$$
It follows: $\ker (h-I) = \bZ$ ; Im $(h-I) = \bZ^2$ and $\coker (h-I) = \bZ.$\\

Next consider as example the deformation $f:= f_s =(x^k-s) z^2 + y z^2 + y^2 z$ for some fixed small enough $s\not=0$, which has transversal type $A_3$. This deformation
 has  $\# F_1A_3 = k $ and $\# F_2A_3 = 0$ and moreover one isolated critical point of type $A_k$.
We compare now the fundamental sequence for $j$ in case $F_1A_3$  and $f$ respectively\footnote{We distinguish the Milnor fibres by a subscript.}:
\begin{equation}
  j = j_1 \oplus j_2: \bZ \rightarrow \bZ \oplus \bZ \rightarrow H_{n-1}(F_{F_1A_3}) = \bZ \rightarrow 0
\end{equation}

\begin{equation}
  j = j_1 \oplus j_2: \bZ^k \rightarrow \bZ^k \oplus \bZ \rightarrow H_{n-1}(F_f)= \bZ \rightarrow 0
\end{equation}

The map $j_2$ for $f$ is as follows:

$\oplus_s H_n(\cZ_s,\cC_s) 
 =  \bZ^k = \oplus_s \bZ^3 / \langle h - I \rangle  \to \bZ^3 / \langle h - I, A_u -I \rangle = H_n(\cY,\cB)$.
It is the sum of components which are isomorphism on each factor $\bZ$. Note  that for the outside loop $u$ we have $A_u - I= (h - 1)( h^{k-1} + \cdots + h + I)$ since $A_{u} = A_{s_{1}}\circ \cdots \circ A_{s_{k}} = h^{k}$  (all $A_{s}$ are equal to $h$).\\
We conclude $H_{1}(F_f) = \bZ$. Next $H_{2}(F_f) = \bZ^{3k-1}$ follows from $\chi(F_f) =3k-1$ computed via Proposition \ref{p:euler}.

We illustrate this example with Figures \ref{fig:dJex2} and \ref{fig:dJex2d}.

\begin{figure}[h]
  \begin{subfigure}[b]{0.3\textwidth}
    \includegraphics[width=\textwidth]{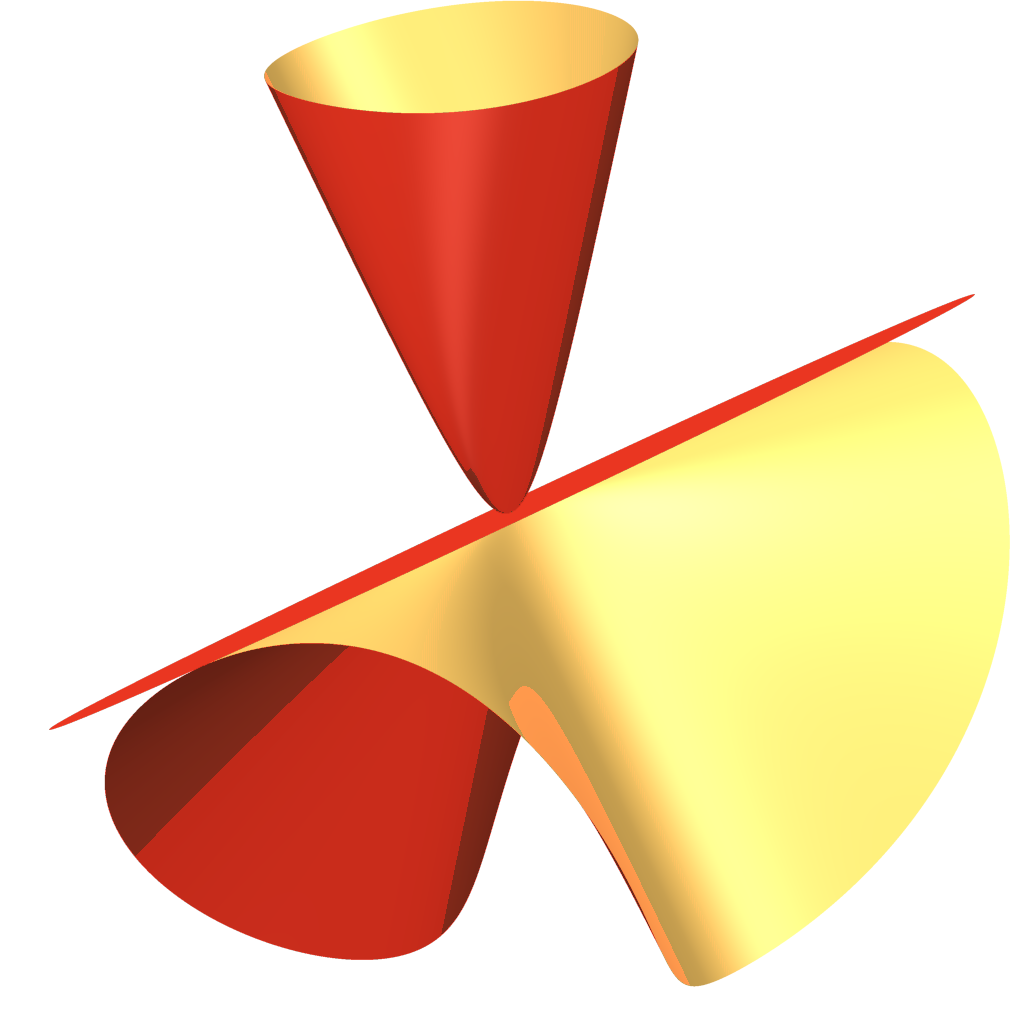}
    \caption{Original surface}
    \label{fig:dJex2}
  \end{subfigure}
  %
	%\hspace{2cm}
	%
  \begin{subfigure}[b]{0.3\textwidth}
    \includegraphics[width=\textwidth]{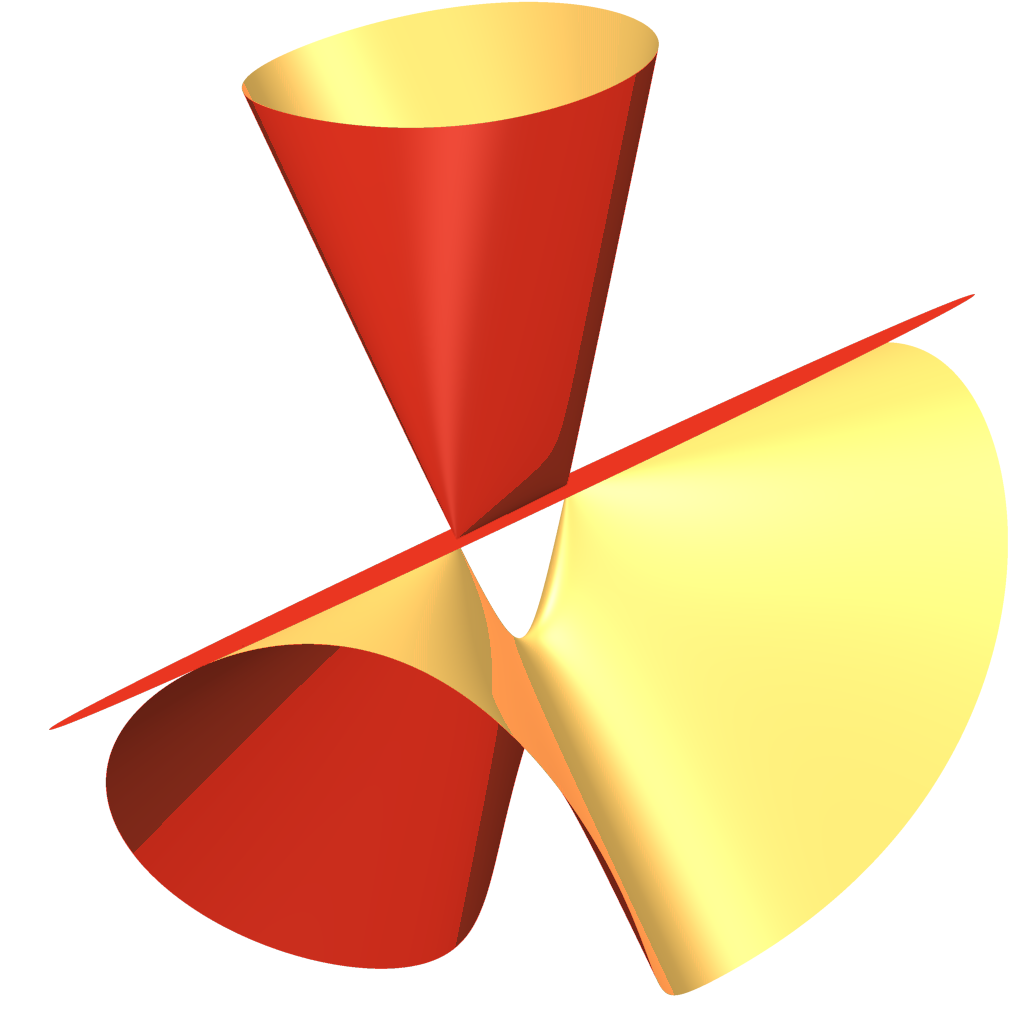}
    \caption{Deformed surface}
    \label{fig:dJex2d}	
  \end{subfigure}
\caption{Deformation $ f_s =(x^k-s) z^2 + y z^2 + y^2 z$, (\tiny{produced with Surfer software})}
\end{figure}

\subsection{More general types}
We show next that the above method is not restricted to the De Jong classes.
Consider $f = z^2x^m-z^{m+2} + z y^{m+1}$. It has the properties:
$F \simeq S^1$ ; 
$\Sigma$ is smooth; transversal type is $A_{2m+1}$;
$A = h^m$, where $h$ is the Milnor monodromy of $A_{2m+1}$.

Note that $\dim \ker (A-I) \ge 1$, and $=1$ in many cases, e.g. $m=2,3,4,5$.
This function $f$ appears as `building block' in the following deformation:\\
$g_s = z^2(x^2-s)^m - z^{m+2} + z y^{m+1}.$\\
This deformation contains two special points of the type $f$ (and no others, except isolated singularities).
If one applies the same procedure as above one gets $b_1(G) = 1$ where $G$ is the Milnor fibre of $g_0$. 
Details are left to the reader.

\begin{remark}
The fact that the first Betti number of the Milnor fibre is non-zero can also be deduced from Van Straten's \cite[Theorem 4.4.12]{vS}:
\emph{
Let $f: (\bC^3,0) \to (\bC,0)$ be a germ of a function without multiple factors, let $F$ be the Milnor fibre of $f$. Then
$$ b_1(F) \geq \# \mbox{\rm \{irreducible components of} \; f=0 \}.$$}
\end{remark}

\subsection{Deformation with triple points}
Let $f_s = x y z (x +y + z -s )$. This defines a deformation of a central arrangement with 4 hyperplanes. We get $\Sigma_i = \bP^1$ (6 copies). 
There are 4 triple points $T_{\infty,\infty,\infty}$ and one $A_1$-point. The maps $j_{1,q} : \bZ^3 \to \bZ^2$ can be described by $j_{1,q}(a,b,c) = (a+c,b+c)$. The map $j_2$ restricts to an isomorphism $\bZ \to \bZ$ on each component. 
We have all  information of the resulting map $j: \bZ^{12} \to \bZ^{14}$ up to the signs of the isomorphisms. From this we get $H_1(F;\bZ_2) =  \bZ_{2}^3$.
Compare with the dissertation \cite{Wi}, where Williams showed in particular that $H_1(F;\bZ) =  \bZ^3$.

\subsection{The class of singularities with $b_n=0$}
Most of the singularities above have $b_{n-1}= 0$ or small. What happens if $b_n=0$ ?  Examples are the product of an isolated singularity with a smooth line (such as $A_{\infty})$ and some of the functions mentioned above (e.g. $F_2 A_3$). Very few is known about this class. We can show the following ``non-splitting property'' w.r.t. isolated singularities:

\begin{proposition}
If $\hat{f}$ has the property, that $b_n(\hat{F}) = 0$, then any admissible deformation has no isolated critical points.
\end{proposition}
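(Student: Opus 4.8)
The plan is to compare top-degree Betti numbers across the direct sum decomposition \eqref{eq:directsumdecomp2} and to read off a contradiction the moment an isolated critical point is present. The whole content is that an isolated critical point of the deformation contributes a nonzero free class in the top relative degree $n+1$, which is incompatible with $b_n(\hat F)=0$.

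First I would exploit admissibility. As recorded in \S\ref{s:defvh}, the pair $(E,F)$ is topologically equivalent to $(\hat E,\hat F)$, so $H_*(F)\cong H_*(\hat F)$; in particular $b_n(F)=b_n(\hat F)=0$, and since $H_n(F)$ is free this means $H_n(F)\otimes\bQ=0$. Next I would transfer this vanishing to the relative data. The space $E=B\cap f^{-1}(\Delta)$ is contractible (it retracts onto the cone $f^{-1}(0)\cap B$), so the long exact sequence of the pair $(E,F)$ furnishes isomorphisms $H_k(E,F)\cong\tilde H_{k-1}(F)$ for all $k\ge 2$. In particular $H_{n+1}(E,F)\cong H_n(F)$, whence $b_{n+1}(E,F)=b_n(F)=0$.

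Finally I would invoke \eqref{eq:directsumdecomp2}, together with the remark made just after it: each summand $H_*(E_r,F_r)$ is concentrated in degree $n+1$ and equals the Milnor lattice $\bZ^{\mu_r}$ of the isolated singularity of $f-f(r)$ at $r$. Hence in degree $n+1$ the free module $\bigoplus_{r\in R}\bZ^{\mu_r}$ is a direct summand of $H_{n+1}(E,F)$, so rationally
\[ 0=b_{n+1}(E,F)=b_{n+1}(\cX\cup\cY,\cA\cup\cB)+\sum_{r\in R}\mu_r\ \ge\ \sum_{r\in R}\mu_r . \]
Since every isolated critical point of a holomorphic function has Milnor number $\mu_r\ge 1$, the forced vanishing $\sum_{r\in R}\mu_r=0$ gives $R=\emptyset$, which is the assertion.

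I do not expect a genuine obstacle here: the argument is essentially bookkeeping of the homological degrees in which the various pieces live. The only two points that must be stated with care are the identification $b_{n+1}(E,F)=b_n(F)$, which rests on the contractibility of $E$, and the invariance of the Betti numbers of $F$ under the admissible deformation; both are already available from the construction in \S\ref{s:defvh}.
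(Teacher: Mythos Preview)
Your argument is correct and follows the same route as the paper's proof: use the direct sum decomposition \eqref{eq:directsumdecomp2} in degree $n+1$ to see that $\bigoplus_{r\in R}\bZ^{\mu_r}$ sits as a direct summand of $H_{n+1}(E,F)\cong H_n(F)=0$, forcing $R=\emptyset$. One small wording issue: after deformation the zero fibre $f^{-1}(0)\cap B$ is no longer literally a cone, so the contractibility of $E$ is better justified via the topological equivalence $(E,F)\simeq(\hat E,\hat F)$ that you have already invoked.
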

\begin{proof}
Note that in \ref{eq:directsumdecomp2} we have  $H_*(E, F)=0$. It follows, that $H_*(\cX\cup \cY, \cA\cup \cB)=0$ and $\oplus_{r\in R} H_*(E_r, F_r) = 0$. Therefore the set $R$ is empty.
\end{proof}

%%%%%%%%%%%%%%%%%%%%%%%%%%%%%%%%%%%%%%%%%%%%%%%%%%%%%%%%%%%%%%%%%%%%%%%%%%%

%%%%%%%%%%%%%%%%%%%%%%%%%%%%%%%%%%%%%%%%%%%%%%%%%%%%%%%%%%%%%%%%%%%%%%%
\end{document}